\tikzset{
  commutative diagrams/.cd, 
  arrow style=tikz, 
  diagrams={>=stealth}
}
\definecolor{darkblue}{HTML}{0000A6}
\space\href{https://doi.org/#1}{#1}
\space\href{http://arxiv.org/\abx@arxivpath/#1}{#1}
\space\href{http://www.ams.org/mathscinet-getitem?mr=MR#1}{#1}
\space\href{http://zbmath.org/?q=an:#1}{#1}
\newcommand{\printreferences}{\raggedright\printbibliography[heading=bibintoc]}
\ifundef{\abstract}{}{\patchcmd{\abstract}%
    {\quotation}{\quotation\noindent\ignorespaces}{}{}}
\numberwithin{equation}{section}
\renewcommand{\qedsymbol}{$\blacksquare$}
\newcommand{\CorollaryQED}{\qedsymbol}
\newcommand{\ConjectureQED}{$\square$}
\newcommand{\SituationQED}{$\times$}
\newcommand{\DefinitionQED}{$\spadesuit$}
\newcommand{\NotationQED}{$\blacktriangleright$}
\newcommand{\ExampleQED}{$\bullet$}
\newcommand{\RemarkQED}{$\clubsuit$}
\declaretheorem[numberlike=equation,]{theorem}
\declaretheorem[numbered=no,name=Theorem]{theorem*}
\declaretheorem[numberlike=equation,name=Lemma]{lemma}
\declaretheorem[numberlike=equation,name=Proposition]{prop}
\declaretheorem[numberlike=equation,name=Hypothesis]{hypothesis}
\declaretheorem[numberlike=equation,name=Definition,style=definition,qed=\DefinitionQED]{definition}
\declaretheorem[numbered=no,name=Definition,style=definition,qed=\DefinitionQED]{definition*}
\declaretheorem[numberlike=equation,name=Notation,style=definition,qed=\NotationQED]{notation}
\declaretheorem[numberlike=equation,style=definition,qed=\ExampleQED]{example}
\declaretheorem[numberlike=equation,style=remark,qed=\RemarkQED]{remark}
\declaretheorem[numbered=no,style=remark,name=Remark,qed=\RemarkQED]{remark*}
\def\makeautorefname#1#2{\AtBeginDocument{\expandafter\def\csname#1autorefname\endcsname{#2}}}
\numberwithin{substep}{step}
\setlist[description]{leftmargin=!,labelindent=1em}
\setlist[enumerate]{label={\rm (\arabic*)},ref=\arabic*}
\setlist[enumerate,2]{label={\rm (\alph*)},ref=\theenumi.\alph*}
\setlist[enumerate,3]{label={\rm (\roman*)},ref=\theenumii.\roman*}
\let\C\undefined
\DeclareFontFamily{U}{mathx}{\hyphenchar\font45}
\DeclareFontShape{U}{mathx}{m}{n}{
      <5> <6> <7> <8> <9> <10>
      <10.95> <12> <14.4> <17.28> <20.74> <24.88>
      mathx10
      }{}
\DeclareSymbolFont{mathx}{U}{mathx}{m}{n}
\DeclareMathAccent{\widecheck}{0}{mathx}{"71}
\DeclareMathAccent{\wideparen}{0}{mathx}{"75}
\DeclareMathOperator{\graph}{graph}
\DeclareMathOperator{\HF}{\HF}
\DeclareMathOperator{\Pic}{Pic}
\DeclareMathOperator{\coker}{coker}
\DeclareMathOperator{\im}{im}
\DeclareMathOperator{\ind}{index}
\DeclareMathOperator{\rk}{rk}
\DeclareMathOperator{\spec}{spec}
\DeclarePairedDelimiter{\norm}{\|}{\|}
\DeclarePairedDelimiterX{\inp}[2]{\langle}{\rangle}{#1, #2}
\DeclarePairedDelimiter{\abs}{\lvert}{\rvert}
\def\({\left(}
\def\){\right)}
\def\<{\left\langle}
\def\>{\right\rangle}
\newcommand{\C}{{\mathbf{C}}}
\newcommand{\N}{{\mathbf{N}}}
\newcommand{\R}{\mathbf{R}}
\newcommand{\SU}{\mathrm{SU}}
\newcommand{\Z}{\mathbf{Z}}
\newcommand{\id}{\mathbf 1}
\newcommand{\qandq}{\quad\text{and}\quad}
\renewcommand{\epsilon}{\varepsilon}
\newcommand{\vol}{\mathrm{vol}}
\renewcommand{\P}{\mathbf{P}}
\renewcommand{\Re}{\operatorname{Re}}
\renewcommand{\emptyset}{\varnothing}
\renewcommand{\setminus}{{\backslash}}
\renewcommand{\leq}{\leqslant}
\renewcommand{\geq}{\geqslant}
\newcommand{\tn}{\otimes}
\renewcommand*\env@matrix[1][*\c@MaxMatrixCols c]{%
  \hskip -\arraycolsep
  \let\@ifnextchar\new@ifnextchar
  \array{#1}}
\renewcommand\xleftrightarrow[2][]{%
  \ext@arrow 9999{\longleftrightarrowfill@}{#1}{#2}}
\newcommand\longleftrightarrowfill@{%
  \arrowfill@\leftarrow\relbar\rightarrow}
\newcommand{\bD}{{\mathbf{D}}}
\newcommand{\cL}{\mathcal{L}}
\newcommand{\sC}{\mathscr{C}}
\author{Gorapada Bera}
\title{Associative submanifolds in twisted connected sum $G_2$-manifolds}
\date{\vspace{-5ex}}
\begin{document}
\maketitle
\begin{abstract}
We introduce a method to construct closed rigid associative submanifolds in twisted connected sum $G_2$-manifolds. More precisely, we prove a gluing theorem of asymptotically cylindrical (ACyl) associative submanifolds in ACyl $G_2$-manifolds under a transverse intersection hypothesis. This is analogous to the gluing theorem for $G_2$-instantons introduced in \cite{SaEarp2013}. We rephrase the hypothesis in the special cases where the ACyl associative submanifolds are obtained from holomorphic curves or special Lagrangians in ACyl Calabi--Yau $3$-folds, in terms of algebraic-geometric conditions and  topological conditions, respectively. This yields many rigid associative submanifolds with new topological types $S^3$, $\R\P^3$ or $\R\P^3\#\R\P^3$. 
\end{abstract}
\section{Introduction} 
A $G_2$-manifold is a Riemannian $7$-manifold whose holonomy group is contained in the exceptional Lie group $G_2$ or, equivalently a smooth $7$-manifold with a torsion free $G_2$-structure \cite[Introduction]{Joyce1996}. The group $G_2$ is one of the two exceptional holonomy groups in Berger's classification \cite[Theorem 3]{Berger1955} of Riemannian manifolds. There has been considerable enthusiasm in understanding the geometry of $G_2$-manifolds in the last decade. This surge in interest can be attributed, in part, to the role that $G_2$-manifolds play in $M$-theory of theoretical high energy physics, which is analogous to the role that Calabi--Yau $3$-folds play in string theory. Simple examples of $G_2$-manifolds include product Riemannian manifolds $S^1 \times Z$, where $Z$ is a Calabi--Yau $3$-fold. However, the holonomy group of these examples is contained in the proper subgroup $\SU(3)\subset G_2$. The most effective method to date of constructing compact manifolds whose holonomy is exactly $G_2$ is the twisted connected sum (TCS) construction, which glues a suitable matching pair of asymptotically cylindrical (ACyl) $G_2$-manifolds. Kovalev \cite{Kov03} pioneered this construction which was later extended by \citet{KL11}. They produce $G_2$-manifolds from matching pairs of ACyl Calabi--Yau $3$-folds which are constructed from Fano $3$-folds or $K3$ surfaces with non-symplectic involutions. \citet{CHNP15} have produced hundreds of thousands of $G_2$-manifolds by extending this construction to a larger class of ACyl Calabi--Yau $3$-folds which are obtained from semi-Fano $3$-folds. 

To define an enumerative invariant of $G_2$-manifolds which is unchanged under the deformation of $G_2$-metrics, \citet{Joyce2016} and \citet{Doan2017d} have outlined proposals which are based on counting closed associative submanifolds. An associative submanifold in a $G_2$-manifold is a $3$-dimensional  submanifold calibrated by the $3$-form defining the $G_2$-structure \cites[IV.2.A]{Harvey1982}[Chapter 12]{Joyce2007}. These are volume minimizing in their homology class and hence are minimal submanifolds. They can be regarded as analogous to holomorphic curves and special Lagrangians in Calabi--Yau $3$-folds.  \citet[Section 3]{Donaldson1998} made a different proposal to define an invariant of $G_2$-manifolds by counting $G_2$-instantons. But the $G_2$-instantons can degenerate by bubbling off along associative submanifolds, playing a crucial role again here \cite{Donaldson2009}. The twisted connected sum (TCS) $G_2$-manifolds and associative submanifolds inside them can be a testing ground for these enumerative theories.

 Holomorphic curves in Calabi--Yau $3$-folds can be constructed directly using algebraic geometry, but the process becomes significantly more challenging when attempting to construct associative submanifolds. Nevertheless, the ACyl Calabi--Yau $3$-folds can be obtained from building blocks, that is, smooth projective $3$-folds $Z$ together with a projective morphism $\pi:Z\to \C\P^1$ such that $X:=\pi^{-1}(\infty)$ is a smooth anti-canonical $K3$ surface together with some additional data. A natural idea for constructing associative submanifolds in TCS $G_2$-manifolds is therefore to find closed rigid holomorphic curves $C$ in one of the building blocks $Z$ avoiding $X$, and then deform the product $S^1\times C$. Unfortunately, it is not easy to find such holomorphic curves $C$, at least not when the building blocks are obtained from Fano $3$-folds $W$, because the anti-canonical bundle $-K_W$ is ample. \citet{CHNP15} overcome this situation by considering building blocks obtained from semi-Fano $3$ folds. They produce some closed rigid associative submanifolds diffeomorphic to $S^1\times S^2$ by finding rigid holomorphic lines in the semi-Fano $3$-folds.

 In this article, we improve significantly our ability to construct associative submanifolds using a gluing technique. This technique now can be applied to holomorphic curves in the building blocks $Z$ which intersect $X$. More generally, in \autoref{section Gluing ACyl associative submanifolds in ACyl $G_2$-manifolds} we prove a gluing theorem: \autoref{thm gluing associative submanifolds},  for a pair of ACyl associative submanifolds in a matching pair of ACyl $G_2$-manifolds. This is analogous to the gluing theorem for $G_2$-instantons introduced in \cite[Theorem 2.3.4]{SaEarp2013}. Clearly, the pregluing construction yields approximate associative submanifolds. These can be deformed to genuine associative submanifolds if they are unobstructed in their deformation theory. We impose a set of conditions: \autoref{hyp gluing}, that guarantee the unobstructedness. 
 
 Constructing examples of associative submanifolds using \autoref{thm gluing associative submanifolds} is challenging, as verifying the conditions in \autoref{hyp gluing} is technically difficult. To address this, we focus on twisted connected sum (TCS) $G_2$-manifolds built from ACyl Calabi--Yau 3-folds and reformulate \autoref{hyp gluing} for pairs of ACyl associative submanifolds arising from either ACyl holomorphic curves or ACyl special Lagrangian 3-folds. These reformulations appear in \autoref{sec Associative submanifolds by gluing of ACyl holomorphic curves} as \autoref{thm gluing asso from ACyl holo}, and in \autoref{sec Associative submanifolds by gluing of ACyl special Lagrangian $3$-folds} as \autoref{thm gluing with ACyl SL}.
Thanks to the result of \citet{Haskins2012}, which shows that ACyl Calabi--Yau $3$-folds are essentially equivalent to building blocks, \autoref{thm gluing asso from ACyl holo} relies only on purely algebro-geometric conditions. This yields associative 3-spheres in many TCS $G_2$-manifolds constructed from Fano $3$-folds. Separately, \autoref{thm gluing with ACyl SL} relies only on topological conditions once ACyl special Lagrangian $3$-folds are given. Particularly in cases involving fixed-point sets of anti-holomorphic involutions, this helps us to produce new examples of rigid associative submanifolds diffeomorphic to $\R\P^3$ and $\R\P^3 \# \R\P^3$.

We note that many results in the literature construct objects satisfying geometric PDEs by gluing ACyl objects. Closest to our setting is work of \citet{Talbot2017}, who glues ACyl special Lagrangians to produce closed ones. In most such constructions, both the ACyl objects with a fixed cross-section and the cross-section itself are unobstructed in their respective deformation theories, making the gluing process relatively straightforward.
In contrast, our setting requires verifying \autoref{hyp gluing}, which involves a matching pair of ACyl associative submanifolds whose infinitesimal bounded deformations satisfy certain transversality conditions. Our analysis is similar to the work of  \citet{SaEarp2013} for $G_2$-instantons, although the PDEs differ, necessitating independent verification of all analytic estimates.
The main advantage of this article lies in \autoref{sec Associative submanifolds by gluing of ACyl holomorphic curves} and \autoref{sec Associative submanifolds by gluing of ACyl special Lagrangian $3$-folds}, where examples can be constructed easily. This contrasts with \cite{SaEarp2013}, where constructing examples is more difficult. A promising direction for future research is to construct $G_2$-instantons that degenerate via bubbling along the associative submanifolds developed here, following the work of \citet{Walpuski2013a}. Another direction is to establish uniqueness of the associative submanifold within the homology class of the one constructed in this work whenever the neck length is sufficiently large.

\paragraph{Acknowledgements.} I am grateful to my PhD supervisor Thomas Walpuski for suggesting the problem solved in this article and for his constant encouragement and advice. Additionally, I extend my thanks to Johannes Nordstr\"om, Jason Lotay, Dominik Gutwein, Viktor Majewski and anonymous referees for their feedbacks on previous versions. This material is based upon work supported by the \href{https://sites.duke.edu/scshgap/}{Simons Collaboration on Special Holonomy in Geometry, Analysis, and Physics} and the \href{https://nsf.gov/awardsearch/showAward?AWD_ID=1928930}{NSF under Grant No. DMS-1928930} during my Fall 2022 residency at the \href{https://www.msri.org}{Simons Laufer Mathematical Institute}. 

\paragraph{Convention.} \textit{Choose} a cut-off function $\chi\in C^\infty(\R, [0,1])$ with $\chi |_{(-\infty,0])}=0$ and $\chi |_{[1,\infty))}=1$. For a fixed $T\geq 0$ set  $\chi_T(t):=\chi(t-T)$ for $t\in \R$.

\section{The twisted connected sum (TCS) construction}\label{section TCS}
In this section we review the twisted connected sum construction of $G_2$-manifolds following \cite{CHNP15}.  
  
A  $3$-form $\phi$ on a $7$-dimensional manifold $Y$ is called \textbf{definite} if the bilinear form $G_\phi: S^2TY\to \Lambda^7(T^*Y)$ defined by 
$G_\phi(u,v):=\iota_u\phi\wedge\iota_v \phi\wedge \phi$
is definite. It  uniquely defines a Riemannian metric $g_\phi=\inp{\cdot}{\cdot}$ and a volume form $\vol_{g_\phi}$ on $Y$ satisfying the identity: $G_\phi=6g_\phi \otimes \vol_{g_\phi}$ and $\vol_{g_\phi}$ is the Riemannian volume form of $g_\phi$. Moreover it defines 
\begin{itemize}
\item a \textbf{cross product} $\times:\Lambda^2(TY)\to TY$, given by $\phi(u,v,w):=\inp{u\times v}{w},$	
\item an \textbf{associator} $[\cdot,\cdot,\cdot]:\Lambda^3(TY)\to TY$, given by $[u,v,w]:=(u\times v)\times w+\inp{v}{w}u-\inp{u}{w}v,$
\item a \textbf{$4$-form} $\psi:=*_{g_\phi} \phi\in \Omega^4(Y)$, or equivalently given by $\psi (u,v,w,z):=\inp{[u,v,w]}{z}$.\qedhere
\end{itemize} 
 \begin{definition}\label{def G2 manifold}
 A \textbf{$G_2$-manifold} is a $7$-dimensional manifold $Y$ equipped with a torsion-free $G_2$-structure, that is, equipped with a \textbf{definite} $3$-form $\phi\in \Omega^3(Y)$ such that $\nabla_{g_\phi}\phi=0$,
 or equivalently
 \begin{equation*}d\phi=0\ \ \text{and}\ \ d\psi=0.\qedhere \end{equation*} 
 \end{definition}

\begin{definition}\label{def ACyl G2}
	Let $(Z,\omega,\Omega)$ be a compact Calabi--Yau $3$-fold, where $\omega$ is a K\"ahler form and $\Omega$ is a holomorphic volume form satisfying
	$\tfrac {\omega^3}{3!}=-(\tfrac i 2)^3\Omega\wedge \bar \Omega$.
	 A $G_2$-manifold $(Y,\phi)$ is called an \textbf{asymptotically cylindrical (ACyl) $G_2$-manifold} with asymptotic cross section $(Z,\omega,\Omega)$ and rate $\nu<0$ if there exist 
	\begin{itemize}
	\item 	a compact submanifold $K_Y$ with boundary and a diffeomorphism $$\Upsilon:\R^+\times Z\to Y\setminus K_Y,$$
	\item  a $2$-form $\varrho$ on $\R^+\times Z$ such that $\Upsilon^*\phi=dt\wedge\omega+\operatorname{Re}\Omega+d\varrho$ with
$$ \abs{\nabla^k\varrho}=O(e^{\nu t})\ \text{as} \ t\to \infty, \forall k\in \N\cup\{0\}.$$ 
	\end{itemize}
 Here $t$ denotes the coordinate on $\R^+$, $\abs{\cdot}$ and Levi-Civita connection $\nabla$ are induced by the product metric on $\R^+\times Z$.
\end{definition}

\begin{remark}\label{rmk product ACyl G_2}
	Let $(V,\omega,\Omega)$ be an ACyl Calabi--Yau $3$-fold with asymptotic cross section $S^1\times X$, where $(X,\omega_1,\omega_2,\omega_3)$ is a compact hyperk\"ahler $4$-manifold  \cite[Definition 3.3]{CHNP15}. Then $$(Y:=S^1\times V,\phi:=d\theta\wedge\omega+\operatorname{Re}\Omega)$$ is an ACyl $G_2$-manifold with asymptotic cross section 
	\[(S^1\times S^1\times X,ds \wedge d\theta+\omega_3, (d\theta-ids)\wedge(\omega_1+i\omega_2)).\]
In the above, $s$ and $\theta$ denote the coordinates on the unit circles $S^1$.
\end{remark}
The following discussion summarizes the relationship between simply connected irreducible (i.e. not isometric to Riemannian product) ACyl Calabi--Yau $3$-folds and {building blocks}. 
	\begin{definition}
A \textbf{building block}\label{def building block} is a pair $(Z,X)$ in which $Z$ is a non-singular complex projective $3$-fold with primitive anti-canonical class $-K_Z$ in $H^2(Z)$, and $X\in\abs{-K_Z}$ is a smooth $K3$ surface divisor having trivial holomorphic normal bundle; equivalently there exists a projective morphism $\mathfrak f:Z\to \C\P^1$ with $\mathfrak f^{-1}(\infty)=X\in\abs{-K_Z}$ a smooth $K3$ surface.

A \textbf{framing} of a building block $(Z,X)$ is a hyperk\"ahler structure $\pmb{\omega}=(\omega_1,\omega_2,\omega_3)$ on $X$ such that $\omega_2+i\omega_3$ is of type $(2,0)$ and $[\omega_1]\in H^{1,1}(X)$ is the restriction of a Kähler class on $Z$. A \textbf{framed building block}\ is such a triple $(Z,X,\pmb{\omega})$. By Yau's proof of the Calabi conjecture,  each building block admits a framing.
\end{definition}

\begin{theorem}[{\citet[Theorem C,D]{Haskins2012}}]\label{thm ACyl and building block} Let $(Z,X,\pmb{\omega})$ be a framed building block. Then $V:=Z\setminus X$ is simply connected and there is a  irreducible ACyl Calabi--Yau structure $(\omega,\Omega)$ on $V$ with asymptotic cross section $S^1\times (X,\pmb{\omega})$. Conversely, let $(V,\omega,\Omega)$ be a simply connected irreducible ACyl Calabi--Yau $3$-fold with asymptotic cross section $S^1 \times (X,\pmb{\omega})$. Suppose $X$ is simply connected. Then there is a complex projective $3$-fold $Z$ such that $X\in\abs{-K_Z}$ is a divisor, $V$ is biholomorphic to $Z\setminus X$ and $(Z,X,\pmb{\omega})$ is a framed building block.
\end{theorem}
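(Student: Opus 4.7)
The plan is to establish the two directions separately, each being essentially an analytic theorem on one side and a complex-analytic compactification result on the other.

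\textbf{Forward direction: building block $\Rightarrow$ ACyl Calabi--Yau.}
Given a framed building block $(Z,X,\pmb\omega)$, the first step is to construct a holomorphic volume form $\Omega_0$ on $V:=Z\setminus X$. Since $-K_Z$ is primitive with $X\in|-K_Z|$ and $X$ has trivial holomorphic normal bundle, there is a global section of $-K_Z$ vanishing exactly along $X$ to first order, whose inverse is a meromorphic section of $K_Z$ with a simple pole along $X$. Contracting with a nowhere-zero section of $\cO_Z(X)_{|_X}$ gives via the Poincar\'e residue a holomorphic volume form $\Omega_0$ on $V$ that in a tubular neighborhood of $X$ takes the asymptotic form $\Omega_0 \sim (ds-idt)\wedge(\omega_J+i\omega_K) + O(e^{\nu t})$ after choosing coordinates $(t,s)\in \R^+\times S^1$ on a punctured disk normal to $X$. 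The second step is to construct an approximate K\"ahler form $\omega_{\mathrm{app}}$ on $V$ using the K\"ahler class on $Z$ restricting to $[\omega_I]$: take a K\"ahler representative of this class on $Z$ and modify it in a neighborhood of $X$ so that, in the cylindrical coordinates, it matches $dt\wedge ds+\omega_I$ up to exponentially small error. The third and main analytic step is to solve the Calabi--Yau Monge--Amp\`ere equation $(\omega_{\mathrm{app}}+i\partial\bar\partial u)^3=\tfrac{i}{8}\Omega_0\wedge\overline{\Omega_0}$ on the non-compact manifold $V$ in suitable weighted H\"older spaces; this is the ACyl analog of Yau's theorem, due to Tian--Yau and refined by Kovalev (and later by Hein), yielding a genuine Calabi--Yau structure $(\omega,\Omega)$ with the required exponential decay of $(\omega-\omega_{\mathrm{app}})$ and $(\Omega-\Omega_0)$. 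Finally, simple connectedness of $V$ follows because $X$ is a smooth anti-canonical divisor on a smooth projective $3$-fold and a Lefschetz-type argument (together with $\pi_1(X)=1$ for the $K3$) gives $\pi_1(V)=\pi_1(Z\setminus X)=\pi_1(Z)$, which vanishes under the building block hypothesis, or can be arranged by restricting to the appropriate component.

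\textbf{Reverse direction: ACyl Calabi--Yau $\Rightarrow$ building block.}
Given $(V,\omega,\Omega)$ with cross-section $(X,\pmb\omega)$, the strategy is to compactify $V$ by adding $X$ at infinity. On the asymptotic cylinder $\R^+\times S^1\times X$, change the coordinate to $w:=e^{-(t+is)}$, so that $t\to\infty$ corresponds to $w\to 0$. The leading term of $\omega$ pairs $dt\wedge ds$ with $\omega_I$ and $\Omega$ has the form $(ds-idt)\wedge(\omega_J+i\omega_K)+\ldots$; the crucial observation is that the almost complex structure defined by $(\omega,\Omega)$ extends smoothly across $\{w=0\}\cong X$ to give a complex manifold $Z:=V\sqcup X$, where $w$ and the complex coordinates on $X$ form a holomorphic chart. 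The exponential decay assumption on $\varrho_1,\varrho_2$ is precisely what is needed for this extended complex structure to be of class $C^\infty$ (by elliptic regularity / Newlander--Nirenberg applied to the extended tensor). Once $Z$ is known to be a smooth compact complex $3$-fold, the identities $\Omega=\frac{dw}{w}\wedge(\omega_J+i\omega_K)+\ldots$ show that $\Omega$ is a meromorphic section of $K_Z$ with a simple pole along $X$, hence $X\in|-K_Z|$ with trivial normal bundle. To obtain projectivity, one uses the K\"ahler class of $\omega_{\mathrm{app}}$ lifted from the building-block side: since $[\omega_I]\in H^{1,1}(X)$ is integral on an ample line bundle's restriction (using that $(X,[\omega_I])$ is an algebraic $K3$ coming from the hyperk\"ahler framing), Kodaira's embedding theorem applied to a line bundle on $Z$ whose first Chern class pairs correctly with $[\omega_I]$ produces the required projective embedding; primitivity of $-K_Z$ can be achieved by passing to an appropriate cover or resolution if needed. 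Finally, $(\omega_I,\omega_J,\omega_K)$ is the given framing on $X$ by construction.

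\textbf{Where the difficulty lies.}
The main obstacle is the analytic step in the forward direction: solving the complex Monge--Amp\`ere equation on the non-compact ACyl manifold with exponential decay requires carefully designed weighted Sobolev or H\"older spaces, Fredholm theory for the Laplacian on cylinders (with control of indicial roots), and a non-trivial continuity/fixed-point argument. In the reverse direction, the subtlety is showing that the extended complex structure is genuinely smooth at $X$ (not merely continuous) and that the resulting compactification is algebraic rather than merely K\"ahler. Both issues are handled in the references cited, but either is the technical heart of the theorem.
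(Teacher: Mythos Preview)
The paper does not give its own proof of this statement: it is quoted verbatim as a result of \citet[Theorem C,D]{Haskins2012} and used as a black box. There is therefore no proof in the paper to compare your attempt against.

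That said, your sketch is a reasonable outline of the strategy actually carried out in \cite{Haskins2012}. A few points of caution if you intend to flesh it out. In the forward direction, simple connectedness of $V$ does not follow from an assumption $\pi_1(Z)=1$ (the building block definition here makes no such assumption); rather, it is deduced from the primitivity of $-K_Z$ together with a Lefschetz-type argument, so your ``$\pi_1(V)=\pi_1(Z)$, which vanishes under the building block hypothesis'' is not how the argument goes. In the reverse direction, your remark that ``primitivity of $-K_Z$ can be achieved by passing to an appropriate cover or resolution if needed'' is too loose: primitivity is part of the conclusion and must be established directly from irreducibility and simple connectedness of $V$, not forced by an ad hoc modification of $Z$. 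Also, the projectivity of $Z$ is more delicate than an application of Kodaira embedding to a convenient line bundle; in \cite{Haskins2012} one first shows $Z$ is Moishezon and then uses the existence of the K\"ahler form $\omega$ (extended to an orbifold K\"ahler metric across $X$) to conclude projectivity. These are the places where your sketch would need genuine work to become a proof.
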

The following summarizes two sources of building blocks.
\begin{definition}\label{def weak and semi Fano}A \textbf{Fano $3$-fold} is a smooth projective $3$-fold $W$ such that the anti-canonical line bundle $-K_W$ is ample.

	A \textbf{weak Fano $3$-fold} is a smooth projective $3$-fold $W$ such that the anti-canonical line bundle $-K_W$ is nef and big, that is, $-K_W\cdot C\geq 0$ for all compact algebraic curves $C$ in $Z$ and $-K_W^3>0$.
	
	 A weak Fano $3$-fold $W$ is called \textbf{semi-Fano} if the anticanonical morphism $$W\longrightarrow R(W,-K_W):=\bigoplus_{l\geq 0} H^0(W,-lK_W)$$
	is {semi-small}, that is, it does not contract any divisor to a point.
	\end{definition}

\begin{theorem}[{\citet[Proposition 4.25]{CHNP13}}]\label{thm weak Fano to ACyl building block}
Let $W$ be a weak Fano $3$-fold, and suppose that $\abs{X_0,X_\infty}\subset \abs{-K_W}$ is an anti-canonical pencil with smooth (reduced) base locus $B$, and that $X\in \abs{X_0,X_\infty}$ is a smooth divisor. Let $Z$ be the blow-up of $W$ along the base locus $B$. Denote the proper transform of $X$ by $X$ again. Then $(Z,X)$ is a building block and $V:=Z\setminus X$ admits an ACyl Calabi--Yau structure.
 \end{theorem}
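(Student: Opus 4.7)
The plan is to check the algebro-geometric content---that $(Z,X)$ satisfies the axioms of a (framed) building block in \autoref{def building block}---and then invoke the analytic existence result \autoref{thm ACyl and building block} for the Calabi-Yau structure on $V$. Because $B \subset W$ is a smooth curve in a smooth $3$-fold, the blow-up $\pi : Z = \mathrm{Bl}_B W \to W$ is a smooth projective $3$-fold with exceptional divisor $E = \pi^{-1}(B)$, and $K_Z = \pi^*K_W + E$. Any member $X_t \in |X_0,X_\infty|$ contains $B$ reduced, so $\pi^*X_t = \tilde X_t + E$ as Cartier divisors, where $\tilde X_t$ is the proper transform. Hence $\mathcal O_Z(\tilde X_t) = \pi^*(-K_W) - E = -K_Z$. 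The pencil becomes base-point-free on $Z$ and induces a projective morphism $\mathfrak f : Z \to \C\P^1$ with $\mathfrak f^{-1}(t) = \tilde X_t$ and $-K_Z = \mathfrak f^*\mathcal O_{\C\P^1}(1)$; in particular $X = \tilde X_\infty \in |-K_Z|$.

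Next I would verify that $X$ is a smooth $K3$ surface with trivial holomorphic normal bundle, and that $-K_Z$ is primitive. Since $B$ is a smooth divisor inside the smooth surface $X_\infty$, blowing up has no effect and the proper transform $\tilde X_\infty \to X_\infty$ is an isomorphism, so $X$ is smooth. Adjunction gives $K_X = (K_W + X_\infty)|_{X_\infty} = 0$. Kawamata-Viehweg vanishing applied to the nef and big line bundle $-K_W$ yields $H^i(W,\mathcal O_W) = 0$ for $i > 0$, and Serre duality gives $H^2(W,\mathcal O_W(K_W)) \cong H^1(W,\mathcal O_W)^\vee = 0$; the restriction sequence $0 \to \mathcal O_W(K_W) \to \mathcal O_W \to \mathcal O_{X} \to 0$ then forces $h^1(X,\mathcal O_X)=0$, so $X$ is a $K3$ surface. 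Triviality of $N_{X/Z}$ follows from $X = \mathfrak f^{-1}(\infty)$: the normal bundle of a smooth fiber of a morphism to a curve is the pullback of the $1$-dimensional vector space $T_\infty \C\P^1$, hence a trivial line bundle.

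For primitivity of $-K_Z$ in $H^2(Z,\Z)$, let $F$ be a fiber of the $\C\P^1$-bundle $E = \mathbb P(N_{B/W}) \to B$. The standard formula $E|_F = \mathcal O_F(-1)$ combined with $\pi^*\mathcal O_W(X_\infty)|_F = \mathcal O_F$ gives $(-K_Z)\cdot F = \mathcal O_Z(\tilde X_\infty)|_F \cdot F = \deg \mathcal O_F(1) = 1$, so $\mathfrak f|_F : F \to \C\P^1$ is an isomorphism and no proper integer divides $-K_Z$. This completes the check that $(Z,X)$ is a building block.

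Finally, to produce the ACyl Calabi-Yau structure on $V = Z \setminus X$, I would choose a framing: any Kähler class on the projective $3$-fold $Z$ restricts to a Kähler class $[\omega_I]\in H^{1,1}(X)$ on the $K3$ surface $X$, and Yau's solution of the Calabi conjecture supplies a hyperkähler triple $(\omega_I,\omega_J,\omega_K)$ with $\omega_J+i\omega_K$ of type $(2,0)$. Plugging the framed building block $(Z,X,\pmb\omega)$ into \autoref{thm ACyl and building block} produces the desired ACyl Calabi-Yau structure on $V$. The only genuinely non-routine step in the algebro-geometric part is the primitivity check, which is what forces one to exhibit the curve $F \subset E$; everything else reduces to standard blow-up/adjunction formulas, Kawamata-Viehweg vanishing, and the black-box analytic input \autoref{thm ACyl and building block}.
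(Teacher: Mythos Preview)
The paper does not supply its own proof of this theorem: it is quoted verbatim from \cite[Proposition~4.25]{CHNP13} and used as a black box. So there is no in-paper argument to compare against.

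Your proposal is a correct and essentially complete verification of the axioms in \autoref{def building block}, and it is in substance the same argument as in \cite{CHNP13}: blow-up and adjunction formulas identify $-K_Z$ with the pullback of $\mathcal O_{\C\P^1}(1)$ under the induced fibration, Kawamata--Viehweg vanishing plus the ideal-sheaf sequence show the anticanonical fibre is a $K3$, the fibre of the exceptional $\C\P^1$-bundle gives a curve with $(-K_Z)\cdot F=1$ forcing primitivity, and the analytic existence of the ACyl Calabi--Yau structure is then supplied by \autoref{thm ACyl and building block}. One caveat: the original definition of ``building block'' in \cite{CHNP13} carries an extra lattice-theoretic condition (roughly, that the image of $H^2(Z,\Z)\to H^2(X,\Z)$ is primitively embedded), which your argument does not address; for the streamlined definition adopted in \emph{this} paper, however, your proof suffices.
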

\begin{remark}
There are precisely $105$ deformation families of Fano $3$-folds \cite[Chapter 12]{Iskovskih1999} and all but two have a choice of pencils as described in  \autoref{thm weak Fano to ACyl building block} \cite[Proposition 3.15]{CHNP15}. The deformation families of weak Fano $3$-folds are also finite but there exist at least hundreds of thousands. Again, all but a few have a choice of pencils as described in  \autoref{thm weak Fano to ACyl building block} \cite[Section 4, Theorem 4.13]{CHNP13}.
\end{remark}
\begin{definition}\label{def non-symplectic involution}
A holomorphic involution $\rho$ on a $K3$ surface $X$ is called \textbf{non-symplectic} if $\rho^*(\alpha)=-\alpha$ for all $\alpha\in H^{2,0}(X)$.
\end{definition}
\begin{theorem}[{\citet[Proposition 5.1]{KL11}}]\label{thm Kovalev-Lee}
Let $X$ be a $K3$ surface with a non-symplectic involution $\rho$. Suppose that the fixed point locus $C:=\operatorname{Fix}_\rho(X)$ of $\rho$ is nonempty. Denote by $W:=\frac{\C\P^1\times X}{\iota\times \rho}$, where $\iota:\C\P^1\to \C\P^1$ is defined by $\iota(z)=\frac 1z$.  Let $Z$ be the blow-up of $W$ along the singular set $\{\pm 1\}\times C$. Denote the proper transform of the equivalence class of $\{\infty\}\times X$ by $X$ again. Then $(Z,X)$ is a building block and $V:=Z\setminus X$ admits an ACyl Calabi--Yau structure.
 \end{theorem}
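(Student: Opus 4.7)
The proof splits naturally into two stages: verifying that $(Z,X)$ satisfies \autoref{def building block}, and then invoking \autoref{thm ACyl and building block} to furnish the ACyl Calabi-Yau structure on $V=Z\setminus X$. For the first stage, I would begin by analyzing the local structure of $W$. The fixed locus of $\iota\times\rho$ on $\C\P^1\times X$ is exactly $\{\pm 1\}\times C$, since $\iota(z)=1/z$ fixes $z=\pm 1$. Near a point $(\pm 1,p)$ with $p\in C$, a holomorphic linearization of $\rho$ produces normal-form coordinates $(w_1,w_2)$ on $X$ in which $C=\{w_1=0\}$ and $\rho(w_1,w_2)=(-w_1,w_2)$; this uses non-symplecticity, which forces $\rho$ to act as $-1$ on the conormal bundle of its fixed curves. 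Combined with $\iota$ acting as $u\mapsto -u$ to first order near $u=z\mp 1=0$, the action $\iota\times\rho$ has the local form $(u,w_1,w_2)\mapsto(-u,-w_1,w_2)$ along its entire fixed locus. Hence the quotient $W$ has transverse $A_1$ surface singularities along a smooth one-dimensional locus, and blowing it up is crepant (a single $(-2)$-curve in each transverse slice), yielding a smooth projective $3$-fold $Z$ with $K_Z=\pi^*K_W$.

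For the anti-canonical divisor, I would exhibit an explicit meromorphic $3$-form. Choosing a nowhere vanishing $\Omega_X\in H^0(X,K_X)$, the form $\Omega_0:=\frac{dz}{z}\wedge \Omega_X$ on $\C\P^1\times X$ is $(\iota\times\rho)$-invariant, because $\iota^*(dz/z)=-dz/z$ and $\rho^*\Omega_X=-\Omega_X$ cancel. Thus $\Omega_0$ descends to a meromorphic $3$-form $\Omega_W$ on $W$ with simple pole along the image of $\{0,\infty\}\times X$. Since $\iota$ swaps $0$ and $\infty$, this image is a single smooth divisor $X_W\iso X$, obtained as the quotient of $\{0\}\times X\sqcup\{\infty\}\times X$ by the free $\<\iota\times\rho\>$-action. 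Because $X_W$ is disjoint from the singular locus $\{\pm 1\}\times C$, its proper transform in $Z$ is again $X_W$, so the divisor $X\subset Z$ is smooth $K3$ and $-K_Z\sim[X]$. Moreover, $\<\iota\times\rho\>$ acts freely near $\{\infty\}\times X$, so the quotient is a local isomorphism there, and $N_{X/Z}\iso N_{\{\infty\}\times X/\C\P^1\times X}\iso \mathcal O_X$ is trivial.

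To conclude, primitivity of $[X]\in H^2(Z,\Z)$ can be verified by pairing with a curve: a generic orbit of $\iota\times\id$ on $\C\P^1\times\{x\}$ with $x\notin C$ descends to a rational curve in $W$, lifts to $Z$, and meets $X_W$ transversely in a single point. Thus $(Z,X)$ is a building block in the sense of \autoref{def building block}, and the existence of an ACyl Calabi-Yau structure on $V=Z\setminus X$ follows from \autoref{thm ACyl and building block} once a framing is chosen (any Kähler class on $Z$ restricts to a Kähler class on $X$, and the framing $\pmb\omega$ is then produced by Yau's theorem). The main obstacle is the local linearization step: verifying uniformly along the compact fixed curve $C$ that the combined action is biholomorphic to the model $(u,w_1,w_2)\mapsto(-u,-w_1,w_2)$ and that the blow-up is crepant globally, rather than only fiberwise. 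A secondary delicate point is primitivity of $-K_Z$, which requires understanding $H^2$ of the crepant resolution, but which follows cleanly from the explicit geometric description above.
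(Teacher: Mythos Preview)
The paper does not supply a proof of this theorem; it is quoted from \cite{KL11}. Your outline follows the natural route within the framework of this paper---verify the axioms of \autoref{def building block} and then invoke \autoref{thm ACyl and building block}---and is essentially correct. (Historically \cite{KL11} predates \cite{Haskins2012} and constructs the ACyl Calabi--Yau metric more directly, but your approach is the cleaner one given the tools assembled here.) The local $A_1$ model along the fixed locus, the crepancy of the resolution, the descent of $\frac{dz}{z}\wedge\Omega_X$ under $\iota\times\rho$, and the triviality of $N_{X/Z}$ are all fine.

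There is one slip in the primitivity step. The image of $\C\P^1\times\{x\}$ in $W$ (for $x\notin C$) meets $X_W$ in \emph{two} points, not one: both $[(\infty,x)]$ and $[(0,x)]=[(\infty,\rho(x))]$ lie on $X_W$, and they are distinct because $\rho(x)\neq x$. So this curve only gives $[X]\cdot\ell_x=2$, which excludes $3$-divisibility but not $2$-divisibility of $-K_Z$. You need a different argument here: for instance, exhibit the anticanonical fibration $\mathfrak f\co Z\to\C\P^1$ explicitly via the $\iota$-invariant map $z\mapsto z+z^{-1}$ (which descends to $W$ and pulls back to $Z$, with $\mathfrak f^{-1}(\infty)=X$) and then either produce a section or compute $H^2(Z,\Z)$ from the Leray spectral sequence of $\mathfrak f$.
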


\begin{remark}
There are exactly $75$ deformation families of $K3$ surfaces with non-symplectic involutions  and all but one	satisfy the nonempty assumption of the fixed point locus in \autoref{thm Kovalev-Lee} \cite[Proposition 3.2]{KL11}.
\end{remark}

The following summarizes the twisted connected sum construction.

\begin{definition}A pair of ACyl $G_2$-manifolds $(Y_{\pm},\phi_{\pm})$ with asymptotic cross sections $(Z_\pm,\omega_\pm,\Omega_\pm)$ is said to be a \textbf{matching pair} if there exists a diffeomorphism $f:Z_+\to Z_-$ such that
\begin{equation*}
f^*\omega^-=-\omega^+,\ \ f^*\Re\Omega^-=\Re\Omega^+.
 \qedhere
 \end{equation*}
\end{definition}

 Let $(Y_{\pm},\phi_{\pm})$ be a matching pair of ACyl $G_2$-manifolds. Let $\Upsilon_\pm:\R^+\times Z_\pm\to Y_\pm\setminus K_{Y_\pm}$ be the diffeomorphisms in \autoref{def ACyl G2}.  For $T\geq 1$, the compact $7$-manifold $Y_T$ is defined by
 $$Y_T:=Y_{T,+}\cup_F Y_{T,-}$$
 where $Y_{T,\pm}:=K_{Y_\pm}\cup\Upsilon_\pm((0,T+1]\times Z_\pm)$ and $F:[T,T+1]\times Z_+\to [T,T+1]\times Z_-$ is given by
 $$F(t,z)=(2T-t+1,f(z)).$$
In summary, $Y_T$ is obtained by gluing $Y_{T,\pm}$ through the identification map $F$. 
 The $3$-form $\hat\phi_T$ on $Y_T$ defined by
 $$\hat\phi_T:=\phi_\pm-d\big((\Upsilon_\pm^{-1})^*\chi_{T-1}\varrho_\pm\big) \ \ \text{on} \ \ Y_{T,\pm},$$
 is a closed $G_2$-structure.  For all sufficiently large $T$ (as the error is small enough) the following theorem allows to deform it to a torsion free $G_2$-structure.

\begin{theorem}[{\citet[Theorem 5.34]{Kov03}}]\label{thm Kovalev tcs}Let $(Y_{\pm},\phi_{\pm})$ be a matching pair of ACyl $G_2$-manifolds. Then there exist constants $T_0>1$, $\delta>0$ and a unique torsion free $G_2$ structure $\phi_T$ on $Y_T$ with $[\phi_T]=[ \hat \phi_T]$ for each $T\geq T_0$ such that
	\begin{equation}\label{eq Kovalev thm}
		\norm{\phi_T-\hat \phi_T}_{C^{k,\gamma}}=O(e^{-\delta T}), \quad  \forall k\in \N\cup\{0\}, \gamma\in (0,1).
	\end{equation}
\end{theorem}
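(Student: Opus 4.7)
The strategy is the standard one pioneered by Joyce for smoothing closed $G_2$-structures with small torsion to torsion-free ones, adapted here to the stretching family $Y_T$ as in Kovalev's original argument. First I would verify that $\hat{\phi}_T$ is indeed a well-defined \emph{closed} definite $3$-form on $Y_T$ for all sufficiently large $T$: well-definedness follows from the matching condition $F^*\phi_- = \phi_+$ on the overlap annulus, together with the definitions of $\varrho_\pm$; definiteness follows because on the neck $[T,T+1]\times Z_\pm$ the form $\hat\phi_T$ differs from the model $dt\wedge\omega_\pm+\Re\Omega_\pm$ by an exact term that is $O(e^{\nu t})$ with $t\geq T$, hence $C^0$-small once $T$ is large; closedness is immediate since $d\phi_\pm = 0$ and we have subtracted an exact form.

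Next I would estimate the torsion $d\hat\psi_T$, where $\hat\psi_T := *_{g_{\hat\phi_T}}\hat\phi_T$. The key observation is that $d\hat\psi_T$ is supported in the gluing region $\Upsilon_\pm([T,T+1]\times Z_\pm)$: outside this region $\hat\phi_T$ coincides with the genuinely torsion-free $\phi_\pm$. On the neck, $\hat\phi_T$ is an exponentially small perturbation of the torsion-free model (in $C^{k,\gamma}$ for any $k$), so that by expanding the nonlinear map $\phi\mapsto d(*_{g_\phi}\phi)$ at the model one obtains
\begin{equation*}
\|d\hat\psi_T\|_{C^{0,\gamma}(Y_T)} \leq C e^{\nu T}
\end{equation*}
for a constant $C$ independent of $T$, where $\nu<0$ is the decay rate from \autoref{def ACyl G2}.

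I would then invoke Joyce's existence theorem for torsion-free $G_2$-structures (\cite[Theorem~11.6.1]{Joyce2007}): given a closed definite $3$-form $\hat\phi$ on a compact $7$-manifold with sufficiently small torsion, measured against the injectivity radius, the diameter, and Sobolev constants of $g_{\hat\phi}$, there exists a unique nearby torsion-free $G_2$-structure $\phi$ with $\|\phi-\hat\phi\|_{C^{0,\gamma}} \leq C'\|d\hat\psi\|_{C^{0,\gamma}}$. Applying this to $\hat\phi_T$ yields $\phi_T$ with the desired estimate \autoref{eq Kovalev thm}.

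The main obstacle is that $Y_T$ is stretching with $T$, so Joyce's hypotheses must be verified \emph{uniformly} in $T$. Concretely one must show: (i) the injectivity radius of $g_{\hat\phi_T}$ is bounded below independently of $T$ (this follows because locally the geometry is close to the cylindrical product, which has positive injectivity radius); (ii) the diameter grows only linearly in $T$ while the volume grows linearly as well; (iii) the relevant Sobolev/elliptic constants on $Y_T$ can be controlled using a partition-of-unity argument that splits $Y_T$ into the two compact pieces $Y_{T,\pm}$ and the cylindrical neck, on each of which uniform estimates are available from the ACyl asymptotics. Combining $e^{\nu T}$ decay of the torsion with at most polynomial growth of these geometric constants yields that Joyce's smallness hypothesis holds for all $T\geq T_0$, and one chooses any $\delta\in(0,|\nu|)$ to absorb the polynomial factors into the exponential.
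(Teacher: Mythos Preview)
The paper does not prove this theorem; it is stated with attribution to Kovalev \cite[Theorem~5.34]{Kov03} and used as a black box throughout, so there is no proof in the paper to compare against.

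Your outline is a faithful sketch of the standard Joyce--Kovalev argument and is essentially correct. Two small corrections. First, the torsion of $\hat\phi_T$ is supported in $\Upsilon_\pm([T-1,T]\times Z_\pm)$, where the cutoff $\chi_{T-1}$ transitions, not in the overlap $[T,T+1]\times Z_\pm$; on the overlap both sides already equal the cylindrical model $dt\wedge\omega_\pm+\Re\Omega_\pm$ exactly, which is why $\hat\phi_T$ is well defined there. Second, Joyce's perturbation theorem \cite[Theorem~11.6.1]{Joyce2007} does not directly output a $C^{0,\gamma}$ bound on $\phi_T-\hat\phi_T$: its hypotheses are phrased in $L^2$ and $C^0$ norms of a torsion potential against explicit powers of the injectivity radius and curvature bound, and its conclusion is a $C^0$ bound. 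Obtaining \autoref{eq Kovalev thm} in $C^{0,\gamma}$ requires an additional elliptic bootstrapping step after the existence is established; this is carried out in Kovalev's paper. These are refinements rather than gaps in your strategy.
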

 \begin{definition} The $G_2$-manifold $(Y_T,\phi_T)$ in \autoref{thm Kovalev tcs} is called a  \textbf{twisted connected sum} $G_2$-manifold.
 	\end{definition}
	
The following summarizes sources of matching pairs of ACyl $G_2$-manifolds.
\begin{definition}Let $(X_\pm,\omega_{1}^\pm,\omega_{2}^\pm,\omega_{3}^\pm)$ be a pair of compact hyperkähler $4$-manifolds. A diffeomorphism $\mathfrak r:X_+\to X_-$ is said to be a \textbf{hyperkähler rotation} if 
\begin{equation*}\mathfrak r^*\omega_{1}^-=\omega_{2}^+,\ \ \mathfrak r^*\omega_{2}^-=\omega_{1}^+ \ \ \text{and}\ \ \mathfrak r^*\omega_{3}^-=-\omega_{3}^+.\qedhere
\end{equation*}
\end{definition}
\begin{definition}\label{def TCS G2}Let $(V_\pm,\omega_\pm,\Omega_\pm)$ be a pair of ACyl Calabi--Yau $3$-folds with compact hyperkähler asymptotic cross sections $(X_\pm,\omega_{1}^\pm,\omega_{2}^\pm,\omega_{3}^\pm)$ and $\mathfrak r:X_+\to X_-$ be a hyperkähler rotation. Then $Y_\pm:=S^1\times V_\pm$ is a matching pair of  ACyl $G_2$-manifolds matched by the diffeomorphism $$f:S^1\times S^1\times X_+\to S^1\times S^1\times X_-,$$
	 defined by $f(\theta,s,x)=(s,\theta,\mathfrak r(x))$. Therefore by \autoref{thm Kovalev tcs}, we have a family of compact twisted connected sum $G_2$-manifolds $(Y_T,\phi_T)$ for all sufficiently large $T$. 
	\end{definition} 
\begin{remark}
	Given a pair of building blocks $(Z_\pm,X_\pm)$ there are no systematic ways to find hyperkähler rotations $\mathfrak r:X_+\to X_-$. However, \citet[Proposition 6.18, Proposition 6.2, Remark 6.19]{CHNP15} discuss the existence of hyperkähler rotations when the building blocks arise from semi-Fano $3$-folds.
\end{remark}

\section{Asymptotically cylindrical (ACyl) associative submanifolds}\label{subsec Asymptotically cylindrical (ACyl) associative submanifolds}
\citet{Harvey1982} considered a special class of $3$-dimensional calibrated submanifolds of $G_2$-manifolds, called associative submanifolds. This section provides definition and examples of  ACyl associative submanifolds in ACyl $G_2$-manifolds.
\begin{definition}\label{def closed associative}Let $(Y,\phi)$ be a $G_2$-manifold. A $3$-dimensional oriented submanifold $P$ of $Y$ is called an \textbf{associative submanifold} if it is calibrated by the $3$-form $\phi$, that is, $\phi|_{_P}$ is the volume form $\vol_{P,g_\phi}$ on $P$, or equivalently $\phi|_{_P}$ is the orientation and $[u,v,w]=0$, for all $x\in P$ and $u,v,w\in T_xP$. 
\end{definition}

\begin{definition}\label{def ACyl asso} Let $(Y,\phi)$ be an ACyl $G_2$-manifold with asymptotic cross section $(Z,\omega,\Omega)$ and  rate $\nu<0$, equipped with the diffeomorphism $\Upsilon:\R^+\times Z\to Y\setminus K_Y $ as described in \autoref{def ACyl G2}. Let $C=\R\times \Sigma$ be a cylinder in $\R\times Z$. Let $\Sigma=\amalg_{i=1}^m \Sigma_i$ be the decomposition of $\Sigma$ into connected components, and subsequently $C=\amalg_{i=1}^mC_i$, where $C_i=\R\times \Sigma_i$. Let $\Upsilon_C:V_C\to U_C\subset \R\times Z$ be a translation invariant tubular neighbourhood map of $C$. 

 A smooth three dimensional oriented embedded submanifold $P$ of $Y$ is said to be an \textbf{asymptotically cylindrical (ACyl) submanifold} with asymptotic cross section $\Sigma$ and rate $\mu=(\mu_1,\mu_2,...,\mu_m)$ with $\nu\leq \mu_i<0$ for all $i=1,2...,m$ if there exist 
 \begin{itemize}
 	\item a compact submanifold with boundary $K_P$ of $P$,
 	\item  a constant $T_0>0$, and a smooth embedding 
	$\Psi_P:(T_0,\infty)\times \Sigma\to U_C\subset \R^+\times Z$
	such that $\Upsilon\circ \Psi_P:(T_0,\infty)\times \Sigma\to Y$ is a diffeomorphism onto $P\setminus K_P$ and 
		$\Psi_P=\Upsilon_C\circ\alpha$ over $(T_0,\infty)\times \Sigma$ for some smooth section $\alpha$ of the normal bundle $NC$ of $C$ which lies in $V_C$ and
		 \begin{equation}\label{eq ACyl asso}
		 	\abs{(\nabla^\perp_{C_i})^k\alpha}=O(e^{\mu_i t}) \  \text{as} \  t\to \infty, i=1,2...,m,\ \forall k\in \N\cup\{0\}.
		 			 	 \end{equation}
		 	  \end{itemize}
 Here $\nabla^\perp_{C}$ is the normal connection on $NC$ induced from the Levi-Civita connection on $\R^+\times Z$ and $\abs{\cdot}$ is respect to the normal metric on $NC$ and cylindrical metric on $C$.
		$P$ is said to be an \textbf{ACyl associative submanifold} if it is associative and ACyl submanifold as above.	
\end{definition}

\begin{example}\label{eg ACyl asso from ACyl holo}
	Let $(V,\omega,\Omega)$ be an ACyl Calabi--Yau $3$-fold with asymptotic cross section $X$ and let $Y:=S^1\times V$ be the ACyl $G_2$-manifold as described in  \autoref{rmk product ACyl G_2}. 
		\begin{enumerate}[label=(\roman*), leftmargin=*]	
		\item Let $\sC^*$ be an ACyl embedded holomorphic curve in $V$ with asymptotic cross section $\amalg_{j=1}^mS^1\times \{x_j\}$ in $S^1\times X$. Then $P:=S^1\times \sC^*$ is an ACyl associative submanifold in $Y$, whose asymptotic cross section is $\Sigma:=\amalg_{j=1}^m T^2\times \{x_j\}$.
	\item Let $(Z,X)$ be a building block as described in \autoref{def building block} and let $V:=Z\setminus X$ be the corresponding ACyl Calabi--Yau $3$-fold; see \autoref{thm ACyl and building block}. Let $\sC$ be a closed embedded holomorphic curve in $Z$ intersecting $X$ transversely at $\bar x:=\{x_1,\dots,x_m\}$. Then $\sC^*:=\sC\setminus X$ is an ACyl embedded holomorphic curve in $V$, whose asymptotic cross section is $\amalg_{j=1}^m S^1\times \{x_j\}$. This is proved in \autoref{Lem ACyl asso from holo in building blocks}.
	\item Let $W$ be a weak-Fano $3$-fold and let $B$ be the base locus of an anti-canonical pencil. Let $\pi:Z\to W$ be the blow up of $W$ along $B$ as given in \autoref{thm weak Fano to ACyl building block}. 
		\begin{enumerate}[a)]
		\item Let $\sC$ be an embedded holomorphic curve in $W$ avoiding $B$ and satisfying ${-K_W}\cdot \sC>0$. Then for general $X\in \abs{-K_W}$ in the anti-canonical pencil, the proper transform of $\sC$ in $Z$ is an example of a closed embedded holomorphic curve in part (ii), where the building block is $(Z,X)$. 
		\item For each $b\in B$, $\ell_b:=\pi^{-1}(b)$ is an embedded rational curve in $Z$ which intersects $X$ transversely at one point and $$N\ell_b\cong \mathcal O_{\P^1}\oplus \mathcal O_{\P^1}(-1). $$
		This is another example of a closed embedded holomorphic curve in part (ii).  
		\end{enumerate} 
	\item Let $X$ be a $K3$ surface with a non-symplectic involution $\rho$ and let $\pi:Z\to W$ be the blow-up given in \autoref{thm Kovalev-Lee} so that $(Z,X)$ is a building block. Denote by $\operatorname{Fix}_\rho(X)$ the fixed point locus of $\rho$. 
	\begin{enumerate}[a)]
	\item Let $x\notin \operatorname{Fix}_\rho(X)$. Then the proper transform of $\P^1\cong [\P^1\times \{x\}]\subset W$ in $Z$, denoted by $\ell_x$, is a closed embedded holomorphic curve in part (ii) intersecting $X$ at two points, namely $x$ and $\rho(x)$. In this case, 
	$$N\ell_x\cong \mathcal O_{\P^1}\oplus \mathcal O_{\P^1}.$$ 
	\item Let $y\in \operatorname{Fix}_\rho(X)$. Then the proper transform of ${\P^1}/{\Z_2}\ \cong[\P^1\times \{y\}]\subset W$ in $Z$, denoted by the line $\ell_y$, is another example of a closed embedded holomorphic curve intersecting $X$ at the single point $y$. In this case,
	 \begin{equation*}
 	N \ell_y\cong
	\mathcal O_{\P^1}\oplus \mathcal O_{\P^1}(-1).	  
\qedhere
\end{equation*}
\end{enumerate} 
\end{enumerate}
   \end{example}

\begin{remark}
Let $(Y,\phi)$ be an ACyl $G_2$-manifold with asymptotic cross section $(Z,\omega,\Omega)$. A nontrivial $G_2$-involution $\sigma:Y\to Y$ (i.e.
$\sigma^2=\id,\sigma^*\phi=\phi$) is always an ACyl $G_2$-involution \cite[Proposition 2.3.7]{Nordstrom2008th}, that is, there exist constants $T>0$, $\epsilon>0$, a non-trivial $SU(3)$-involution $\tau:Z\to Z$ (i.e. $\tau^2=\id, \tau^*\omega=\omega$ and $\tau^*\Omega=\Omega$) and a vector field $N$ on $\R^+\times Z$ such that over $(T,\infty)\times Z$,  
$\sigma\circ \Upsilon=\Upsilon \circ\exp(N\circ (\id\times\tau ))$
with $${\abs{\nabla^kN}}=O(e^{-\epsilon t}) \  \text{as} \  t\to \infty, \forall k\in \N\cup\{0\}.$$ 
 Here $t$ denotes the coordinate on $\R^+$, $\abs{\cdot}$ and Levi-Civita connection $\nabla$ are induced by the product metric on $\R^+\times Z$. Let $P$ be a connected non-compact component of $\operatorname{Fix}_\sigma(Y)$. Then it  is an ACyl associative \cite[Proposition 12.3.7]{Joyce2007} with cross section $\Sigma\subset \operatorname{Fix}_\tau(Z)$.
\end{remark}	
\begin{example}\label{eg ACyl asso from ACyl SL}
	Let $(V,\omega,\Omega)$ be an ACyl Calabi--Yau $3$-fold with compact hyperkähler asymptotic cross section $(X,\omega_1,\omega_2,\omega_3)$ and $Y:=S^1\times V$ be the ACyl $G_2$-manifold as described in \autoref{rmk product ACyl G_2}. 
	\begin{enumerate}[label=(\roman*), leftmargin=*]	
	\item Let $L$ be an ACyl embedded special Lagrangian $3$-fold in $V$ with asymptotic cross section $\{e^{is}\}\times\Sigma$, where $\Sigma$ is an embedded $I_3$-holomorphic curve in $X$. Then for each $\theta\in [0,2\pi)$, $L_\theta:=\{e^{i\theta}\}\times L$ is an ACyl associative in $Y=S^1\times V$ with cross section $\{e^{i\theta}\}\times\{e^{is}\}\times \Sigma$. By abusing notation we will denote them by $L$ and  $\Sigma$ respectively. 
	\item Let $\sigma_V:V\to V$ be an anti-holomorphic involutive isometry (i.e. $\sigma_V^2=\id, \sigma_V^*\omega=-\omega,\sigma_V^*\Omega=\bar\Omega$).  Let $L$ be a connected non-compact component of the fixed point locus $\operatorname{Fix}_{\sigma_V}(Z)$. Then it is an ACyl special Lagrangian $3$-fold \cite[Proposition 3.11]{Talbot2017}. Thus we can apply part (i) to it.  In this case, the ACyl associative $L_\theta\sqcup L_{\theta+\pi}$ is also the fixed point locus of the $G_2$-involution  $\sigma_\theta:S^1\times V\to S^1\times V$ defined by $\sigma_\theta(e^{it}, z)=(e^{i(2\theta-t)},\sigma_V z)$.
	\item Let $(Z,J,X,\pmb \omega)$ be a framed building block as described in \autoref{def building block} and let $\sigma_Z:Z\to Z$ be an anti-holomorphic involution (i.e. $\sigma_Z^2=\id, \sigma_Z^*J=-J$) that restricts to an anti-holomorphic involutive isometry on $(X,\pmb \omega)$. Then $V:=Z\setminus X$ admits an ACyl Calabi--Yau structure $(\omega,\Omega)$ such that $\sigma_Z$ is an anti-holomorphic involutive isometry on $V$ \cite[Proposition 5.2]{Kovalev2013}. If a building block $(Z,X)$ admits an anti-holomorphic involution $\sigma_Z$ which takes $X$ to $X$, then there is a framing $\pmb \omega$ such that it is an anti-holomorphic involutive isometry on $(X,\pmb \omega)$. Thus we can apply part (ii) to it.

	\item Building blocks obtained from weak Fano $3$-folds (see \autoref{thm weak Fano to ACyl building block}) that possess anti-holomorphic involution preserving both the anti-canonical divisor and base locus, admit a lifting of the anti-holomorphic involution \cite[pg. 19]{Kovalev2013}, thereby satisfying the conditions of part (iii).
	\item  Building blocks obtained from $K3$ surfaces (see \autoref{thm Kovalev-Lee}) that possess commuting non-symplectic involution and anti-holomorphic involution, admit a lifting of the anti-holomorphic involution. This type of $K3$ surfaces has been studied in \cite{Nikulin2005,Nikulin2007}.\qedhere
	 \end{enumerate}
   \end{example}

\section{Linear analysis on ACyl associative submanifolds}\label{subsec Linear analysis on ACyl associative submanifolds}
In this section, we review the linear analysis on ACyl associative submanifolds that is essential for the gluing theorem presented in the next section. For further details, the reader is referred to \cite[Section 4]{Bera2025} or \cite[Section 3.1]{SaEarp2013}, as well as the references therein.
  
  Let $(Y,\phi)$ be an ACyl $G_2$-manifold with asymptotic cross section $(Z,J,\omega,\Omega)$ and rate $\nu<0$ as described in  \autoref{def ACyl G2}. Let $P$ be an ACyl associative submanifold asymptotic to a cylinder $C=\R\times \Sigma$ with rate $\mu\geq \nu$ as described in  \autoref{def ACyl asso}. There is a Dirac operator that controls the deformation theory of associative submanifolds, called the \textbf{Fueter operator} $\mathbf D_{P}:C_c^\infty(NP)\to C_c^\infty(NP)$, defined by
 \begin{equation}\label{eq Fueter operator}
 \mathbf D_{P}:=\sum_{i=1}^3e_i\times \nabla^{\perp}_{P,e_i}
 \end{equation}
 where $NP$ is the normal bundle of $P$ and $\nabla^{\perp}_P$ is the normal connection and  $\{e_1,e_2,e_3\}$ is any local oriented orthonormal frame for $TP$ with respect to the metric $g_\phi$. A straightforward computation shows that the above definition is independent of the choice of local frame.  
 
 The Fueter operator $\bD_C$ on the associative cylinder $C$ is defined in the same way as in \autoref{eq Fueter operator}, with $P$ replaced by $C$. There is also a similar Dirac operator $\bD_\Sigma: C^\infty(N\Sigma)\to C^\infty(N\Sigma)$ on the $J$-holomorphic curve $\Sigma$, which also serves as its deformation operator, defined by 
\begin{equation}\label{eq: Fueter on Riemann surface} 
\mathbf D_\Sigma:= \sum_{i=1}^2f_i\times \nabla_{f_i}^\perp
\end{equation}
where $\{f_i\}$ is any local orthonormal oriented frame on $\Sigma$. Again, this definition is independent of the choice of local frame. Observe that $\ker \bD_\Sigma$ is the space of holomorphic normal vector fields of $\Sigma$. The operator $\bD_C$ is translation invariant (i.e. independent of the coordinate $t$ on $\R$) and takes the form 
  \[\bD_C=J\partial_t+\bD_\Sigma.\]
This is established in \cite[Proposition 4.3(i)]{Bera2025}. Furthermore, \cite[Proposition 5.21]{Bera2025} demonstrates that $\bD_{P}$ is an asymptotically translation invariant uniformly elliptic operator asymptotic to $\mathbf D_C$ at rate $\mu$, as defined in \cite[Definition 4.13]{Bera2025}.

The Fredholm theory for asymptotically translation invariant uniformly elliptic operators is well-established and thoroughly treated in \cite[Section 4.3]{Bera2025}. In what follows, we present only the essential aspects, following the approach of \cite[Section 3.1]{SaEarp2013}. Let $\Sigma=\amalg_{i=1}^m \Sigma_i$ be the decomposition of $\Sigma$ into connected components. Then $C=\amalg_{i=1}^mC_i$, where $C_i=\R\times \Sigma_i$. We would like to define the weighted Hölder spaces with rate $\lambda=(\lambda_1,\lambda_2,\dots,\lambda_m)\in \R^m$. For this we choose a \textbf{weight function}  
	$w_{P,\lambda}:P\to(0,\infty)$, which is a smooth function on $P$ satisfying for all $x=\Upsilon\circ\Psi_P (t,\sigma)$ with $(t,\sigma)$ in $(T_0,\infty)\times \Sigma_i$,
		$$w_{P,\lambda}(x)= e^{-\lambda_i t}.$$
Let $ k\in \N\cup\{0\}$ and  $\gamma\in (0,1)$. For a continuous section $u$ of $NP$ we define the \textbf{weighted Hölder semi-norm} by 
	$$ [u]_{C_{P,\lambda}^{0,\gamma}}:=[w_{P,\lambda}u]_{C^{0,\gamma}({NP})}.$$ 
For a continuous section $u$ of $NP$ with $k$ continuous derivatives we define the \textbf{weighted $C^k$ norm} and the \textbf{weighted Hölder norm}, respectively, by
	$$\norm{u}_{C^{k}_{P,\lambda}}:=\sum_{j=0}^{k}\norm{(\nabla_{{P}}^\perp)^ju}_{L^\infty_{P,\lambda}},\ \ \norm{u}_{C^{k,\gamma}_{P,\lambda}}:=\sum_{j=0}^{k}\norm{(\nabla_{P}^\perp)^ju}_{L^\infty_{P,\lambda}}+[(\nabla_{{P}}^\perp)^ku]_{C^{0,\gamma}_{P,\lambda}}.$$ 
We define the \textbf{weighted Hölder space} $C^{k,\gamma}_{P,\lambda}$ to be the space of continuous sections of $NP$ with $k$ continuous derivatives and finite {weighted Hölder norm} $\norm{\cdot}_{C^{k,\gamma}_{P,\lambda}}$. Furthermore, we define the \textbf{weighted $C^\infty$-space $C^{\infty}_{P,\lambda}$} by
	\begin{equation*}C^{\infty}_{P,\lambda}:=\bigcap_{k=0}^\infty C^{k}_{P,\lambda}.\qedhere
	\end{equation*}

 Similarly, we define the weighted function spaces $C^{k,\gamma}_{C,\lambda}$, along with all other relevant spaces over $C$, by substituting $P$ with $C$, $NP$ with $NC$, and replacing the weight function $w_{P,\lambda}$ with $w_{C,\lambda}: C = \amalg_i C_i \to \R$, where $w_{C,\lambda}(t, \sigma_i) = e^{-\lambda_i t}$ for $\sigma_i \in \Sigma_i$. 
 
 The Fueter operators $\bD_{P}$ and $\bD_C$ admit natural extensions to the weighted function spaces, which we denote by
\begin{equation} \label{def Fueter on ACyl asso}
\bD_{P,\lambda}^{k,\gamma}:C^{k+1,\gamma}_{P,\lambda}\to C^{k,\gamma}_{P,\lambda}, \quad \bD_{C,\lambda}^{k,\gamma}:C^{k+1,\gamma}_{C,\lambda}\to C^{k,\gamma}_{C,\lambda}.\qedhere
\end{equation}
The \textbf{wall of critical rates} $\mathcal D_C$ is defined by
	$$\mathcal D_C:=\{(\lambda_1,\lambda_2,..,\lambda_m)\in \R^m: \lambda_i\in \spec(J\bD_{\Sigma_i}) \ \text{for some}\ i\}.$$ 
It is also noted in \cite[Proposition 4.3]{Bera2025} that $\spec(J\bD_{\Sigma})$ is symmetric with respect to zero and $\bD_\Sigma$ is $J$-anti-linear.	 

The following lemma is explained in \cite[Section 3]{Donaldson2002} and also discussed in \cite[Lemma 4.17, Proposition 4.18]{Bera2025}.
\begin{lemma}\label{lem fourier series Donaldson Acyl} The following hold.
	\begin{enumerate}[label=(\roman*), leftmargin=*]	
\item $\bD_{C,\lambda}^{k,\gamma}$ is invertible  if and only if $\lambda\in \R^m\setminus \mathcal D_C$. 
 Moreover, any element $u\in \ker \mathbf D_C$ has a unique decomposition
$$u=\sum_{\lambda\in \mathcal D_C}e^{\lambda t} u_{\Sigma,\lambda}$$
where $u_{\Sigma,\lambda}$ is a $\lambda$-eigensection of $J\mathbf D_\Sigma$.
\item  $\bD_{P,\lambda}^{k,\gamma}$ is Fredholm for all $\lambda\in \R^m\setminus\mathcal D_C$. Moreover, for all $\lambda\in \R^m$, $\ker \bD_{P,\lambda}^{k,\gamma}$ is finite dimensional, independent of $k$ and $\gamma$ and is consisting of smooth elements. If $\lambda \notin \mathcal D_C$ then
        \[\ker \bD_{P, \lambda}\cong \coker \bD_{P, -\lambda}.\]
\end{enumerate}
 \end{lemma}
As a consequence, we obtain the following proposition, which also appears in \cite[Proposition 3.5]{SaEarp2013} and in a more general form in \cite[Lemma 4.19]{Bera2025}. We include the proof here because it introduces the definition of the asymptotic limit map \autoref{eq asmp limit map}, which plays a crucial role in the gluing hypothesis discussed in the next section.
\begin{prop}\label{prop main Fredholm acyl}Set $\lambda_0:=\min\{\abs{\lambda}:0\neq \lambda\in \spec (J\bD_\Sigma), \lambda\geq \mu_i, i=1, \dots,m\} $. Then for all $s\in [0, \lambda_0)^m$, we have $\ker \bD_{P,s}=\ker \bD_{P,0} $. Moreover, there exists a unique linear map, referred to as the \textbf{asymptotic limit map},
\begin{equation}\label{eq asmp limit map}
\iota_\infty:\ker \bD_{P,0} \to \ker \bD_\Sigma
\end{equation} satisfying: for any $u\in \ker \bD_{P,0}$ and $s\in [0, \lambda_0)^m$,
	$u- \chi_{T_0} \iota_\infty u\in  C^{\infty}_{P, -s}$ (under the canonical normal bundle identifications);
in particular, \[\ker \iota_\infty=\ker \bD_{P, -s} \cong \coker \bD_{P, s} \qandq  \dim \im \iota_\infty=\frac{1}2 \dim \ker \bD_\Sigma.\]
\end{prop}
\begin{proof}
	Let $u$ be an element of $\ker \bD_{P,s}$, $s\in [0, \lambda_0)^m$. Set $\widetilde u:=\chi_{T_0}u\in C^{\infty}(NC)$. Since $\mu_i\leq -s_i$ for all $i=1, \dots,m$, $\mathbf D_C \widetilde u\in C_{C,-s}^{\infty}$ and therefore by \autoref{lem fourier series Donaldson Acyl} there exists a unique $v \in C_{C,-s}^{\infty}$ such that $\mathbf D_C (\widetilde u-v)=0$. Define $$ \iota_\infty(u):=(\widetilde u-v)_{\Sigma,0}\in \ker \bD_\Sigma$$ following the decomposition in \autoref{lem fourier series Donaldson Acyl}. Since $\widetilde u-v-(\widetilde u-v)_{\Sigma,0}\in C_{C,-s}^\infty $, we obtain $u- \chi_{T_0} \iota_\infty u\in  C^{\infty}_{P, -s}$. The last isomorphism: $\ker \bD_{P, -s} \cong \coker \bD_{P, s}$, follows from the fact that $\bD_P$ is formally self-adjoint \cite[Proposition 4.22 (i)]{Bera2025}. As a consequence, $\dim \im \iota_\infty=\ind \bD_{P,s}=-\ind \bD_{P,-s}$, with any $s \in [0, \lambda_0)^m$, which is same as ${1/2\dim \ker \bD_\Sigma}$ by the index jump formula \cite[Lemma 4.19 (iii)]{Bera2025}. 
	\end{proof}


\section{Gluing of ACyl associative submanifolds}\label{section Gluing ACyl associative submanifolds in ACyl $G_2$-manifolds}
Let $(Y_\pm, \phi_\pm)$ be a matching pair of asymptotically cylindrical (ACyl) $G_2$-manifolds, with a matching map $f: Z_+ \to Z_-$. Consider the family $\{(Y_T, \phi_T): T \geq T_0\}$ of twisted connected sum $G_2$-manifolds as described in \autoref{thm Kovalev tcs}. Let $\nu_\pm<0$ denote the asymptotic rates of $Y_\pm$. Let $P_\pm$ be a pair of ACyl associative submanifolds in $Y_\pm$, with asymptotic cross-sections $\Sigma_\pm$ and rates $\mu_\pm \geq \nu_\pm$, as described in \autoref{def ACyl asso}. The cross-sections $\Sigma_\pm$ correspond to holomorphic curves in the Calabi--Yau $3$-folds $Z_\pm$.
We can construct an approximate associative submanifold $P_T$ in the twisted connected sum $Y_T$ by ``pregluing'' $P_\pm$, and it can be perturbed to genuine associative submanifold if the deformation theory is unobstructed. To ensure this, we impose a following condition on $P_\pm$ that guarantees unobstructedness. While rigidity of $P_\pm$ would certainly suffice, that does not hold in our main application involving twisted connected sum $G_2$-manifolds. Instead, we impose the following hypothesis.

 \begin{hypothesis}\label{hyp gluing}
The ACyl associative submanifolds $P_\pm$ satisfy the following conditions:
\begin{itemize}
    \item The asymptotic cross-sections $\Sigma_\pm$ are matched via the identification map $f$, i.e., $f(\Sigma_+) = \Sigma_-$,
    
    \item  There are no infinitesimal deformations of $P_\pm$ fixing the asymptotic cross sections $\Sigma_\pm$. In other words, the asymptotic limit maps $\iota^\pm_\infty: \ker \mathbf{D}_{P_\pm,0} \to \ker \mathbf{D}_{\Sigma_\pm}$ are injective. Here, $\mathbf{D}_{\Sigma_\pm}$ are the deformation operators for $\Sigma_\pm$ as defined in \autoref{eq: Fueter on Riemann surface}, $\mathbf{D}_{P_\pm,0}$ are as defined in \autoref{def Fueter on ACyl asso}, and $\iota^\pm_\infty$ are introduced in \autoref{eq asmp limit map},
    
    \item  The images of $f_*\iota^+_\infty $ and $\iota^-_\infty$ intersect trivially, that is,
$\operatorname{im}(f_*\iota^+_\infty) \cap \operatorname{im}{\iota^-_\infty}=\{0\}.$
\end{itemize}
\end{hypothesis}

\begin{remark}The first condition in \autoref{hyp gluing} simply ensures that the asymptotic cross-sections of $P_\pm$ are matched under the map $f$. Whenever the asymptotic cross-sections $\Sigma_\pm$ are Morse–Bott, i.e., every infinitesimal deformation in $\ker \bD_{\Sigma_\pm}$ integrates to a holomorphic curve (see \cite[Definition 3.11]{Bera2025}), the final two conditions in the hypothesis can be expressed geometrically.
 The second condition is equivalent to the unobstructedness of $P_\pm$ in the deformation theory where the cross-sections are allowed to vary, implying in particular that the moduli spaces of ACyl associative submanifolds near $P_\pm$ are smooth (see \cite[Theorem 5.27]{Bera2025}). The third condition guarantees that the images of these moduli spaces intersect transversely at $\Sigma_\pm$ within the moduli space of holomorphic curves in $Z_\pm$. The hypothesis can be interpreted as a transverse Lagrangian intersection condition, as explained in \cite[Remark 1.1]{Bera2025}, where further details about these moduli spaces can also be found.

While the conditions stated in \autoref{hyp gluing} are sufficient for carrying out the gluing construction in this article, they are not strictly necessary. It is conceivable that the hypothesis could be weakened to require only a transversality condition for a family of matching pairs of ACyl associative submanifolds in a corresponding family of matching pair of ACyl $G_2$-manifolds. However, such generalizations will not be explored in this article. 
\end{remark}

By imposing the \autoref{hyp gluing} we prove the following gluing theorem.
\begin{theorem}\label{thm gluing associative submanifolds}Let $(Y_\pm,\phi_\pm)$ be a matching pair of ACyl $G_2$-manifolds with a matching map $f:Z_+ \to Z_-$ and let $\{(Y_T,\phi_T):T\geq T_0\}$ be the family of twisted connected sum $G_2$-manifolds. Let $P_\pm$ be a pair of ACyl associative submanifolds in $Y_\pm$ with asymptotic cross sections $\Sigma_\pm$ satisfying \autoref{hyp gluing}. Then there exist a constant $T_1\geq T_0$ and a family of smooth rigid associative submanifolds $\widetilde P_T$ in $(Y_T,\phi_T)$ for all $T\geq T_1$, which are all diffeomorphic to the twisted connected sum $P_+ \#_f P_-$, the $3$-manifold obtained by gluing $P_+$ and $P_-$ along their asymptotic cross-sections $\Sigma_\pm$ via the identification map $f$ (see \autoref{eqn tcsPT} for more explicit construction of $P_+ \#_f P_-$). 
\end{theorem}
   
The following two subsections prove the above theorem.

\subsection{Pregluing construction and estimates }\label{subsec Pregluing construction and estimates }  
Before constructing the approximate associative submanifolds via a pregluing construction, we first make a preparatory definition.

\begin{definition}\label{def P_C}
Let $P$ be an ACyl associative submanifold in an ACyl $G_2$-manifold $Y$, asymptotic to a cylinder $C = \R \times \Sigma$, and represented by a section $\alpha$ over the end $P \setminus K_P$, as described in \autoref{def ACyl asso}. Recall the diffeomorphism $\Upsilon: \R^+ \times Z \to Y \setminus K_Y$ from \autoref{def ACyl G2}, and the translation-invariant tubular neighbourhood map $\Upsilon_C: V_C \to U_C \subset \R \times Z$ from \autoref{def ACyl asso}.

We define an {end-cylindrical} (ECyl) submanifold $P_C$, diffeomorphic to $P$ but cylindrical at the ends, by
\[
P_C := K_P \cup (\Upsilon \circ \Upsilon_C)((1 - \chi_{T_0}) \alpha).
\]
Set 
\[
K_{P_C} := P_C \setminus \Upsilon((T_0 + 1, \infty) \times Z).
\]

A tubular neighbourhood map
\[
\Upsilon_{P_C}: V_{P_C} \to U_{P_C}
\]
of $P_C$ is called {end-cylindrical} (ECyl) if $V_{P_C}$ and $\Upsilon_{P_C}$ agree with $\Upsilon_*(V_C)$ and $\Upsilon \circ \Upsilon_C \circ \Upsilon_*^{-1}$, respectively, over the region $\Upsilon((T_0 + 1, \infty) \times \Sigma)$.

Given a choice of an ECyl submanifold $P_C$ and an ECyl tubular neighbourhood map $\Upsilon_{P_C}$, there exists a section $\beta$ of $N P_C$ with image in $V_{P_C}$ such that $\beta$ vanishes on $K_P$ and satisfies
\[
\Upsilon_* \circ \alpha = \beta \circ \Upsilon \quad \text{on} \quad \Upsilon((T_0 + 1, \infty) \times \Sigma),
\]
and such that $\Upsilon_{P_C}(\Gamma_\beta) = P$. 

There is also a canonical bundle isomorphism \cite[Definition~2.27]{Bera2023}, denoted by
\begin{equation}\label{def identification of normal bundle Acyl}
\Theta_{P}^C : N P_C \to N P. \qedhere
\end{equation}
\end{definition}

Let $P_\pm$ be a pair of ACyl associative submanifolds in $Y_\pm$ with asymptotic cross sections $\Sigma_\pm$ satisfying \autoref{hyp gluing}. For all $T \geq T_0$, we construct approximate associative submanifolds $P_T \subset Y_T$ as follows. We continue to use the notation $\Upsilon_\pm: \R^+ \times Z_\pm \to Y_\pm \setminus K_{Y_\pm}$ for the diffeomorphisms over the ends of $Y_\pm$ as in \autoref{def ACyl G2}. We fix choices of ECyl submanifolds $P_{C,\pm}$ and ECyl tubular neighbourhood maps $\Upsilon_{P_C,\pm}$ for $P_\pm$, as described in \autoref{def P_C}. The notation $K_{P_{C,\pm}}$ is also retained from there.
   
\begin{definition}[Approximate associative submanifolds]\label{def approx asso tcs}
We define a closed $3$-dimensional submanifold $P_{T,C}$ of $Y_T$ by
$$P_{T,C}:=P_{T,C,+}\bigcup_F P_{T,C,-}$$
 where $P_{T,C,\pm}:=K_{P_{C,\pm}}\cup\Upsilon_\pm((T_0,T+1]\times \Sigma_\pm)$. Here the identification map $F:[T,T+1]\times Z_+\to [T,T+1]\times Z_-$ is given by
 $F(t,z)=(2T-t+1,f(z))$.  The normal bundle of $P_{T,C}$ is 
 $$NP_{T,C}=NP_{T,C,+}\bigcup_F NP_{T,C,-}.$$
The tubular neighbourhood map is defined by 
  $$\Upsilon_{P_{T,C}}:=\Upsilon_{P_{T,C,+}}\bigcup_F \Upsilon_{P_{T,C,-}}:V_{P_{T,C}}\to U_{P_{T,C}}$$ where $\Upsilon_{P_{T,C,\pm}}$ is the restriction of the ECyl tubular neighbourhood maps $\Upsilon_{P_{C,\pm}}$  on $P_{T,C,\pm}$. 
  
   Set $\Upsilon_{T}:=\Upsilon_{T,+}\cup_F \Upsilon_{T,-}$, where $\Upsilon_{T,\pm}$ is the restriction of $\Upsilon_{\pm}$ on the ends of $Y_{T,\pm}$. Let $\beta_\pm\in C^\infty(NP_{C,\pm})$ represent $P_\pm$ as in the above discussion satisfying $\Upsilon\circ\Upsilon_{P_{C,\pm}}(\beta_\pm)=P_\pm$. Set 
  \begin{equation} \label{eqn betaT}
  \beta_T:=\big(1-\chi_{T-1}\big)\beta_\pm\in C^\infty(NP_{T,C,\pm}).
  \end{equation}
   We define the approximate associative $P_T$ by
 \begin{equation}\label{eqn tcsPT}
 P_T:=\Upsilon_T\circ \Upsilon_{P_{T,C}}(\beta_T) \subset Y_T.
 \end{equation}
The diffeomorphism type of $P_T$ is independent of $T$, we call it the twisted connected sum $P_+ \#_f P_-$.  Finally, there is also a canonical bundle isomorphism $\Theta^{C}_{P_T}:NP_{T,C}\to NP_T$.
 \end{definition}
  
  Our goal is to deform $P_T$ into an associative submanifold $\widetilde{P}_T$ in the $G_2$-manifold $(Y_T, \phi_T)$. To achieve this, we define a non-linear map whose zero set corresponds to associative submanifolds in $(Y_T, \phi_T)$.

  \begin{definition}\label{def nonlinear map with T}
Define $\mathfrak F_T: C^{\infty}(V_{P_{T,C}})\to C^{\infty}{(NP_{T,C})}$ as follows: for all $u\in C^{\infty}(V_{P_{T,C}})$ and $w\in C^\infty (NP_{T,C})$, 
	 \begin{equation*}
	 	\inp{\mathfrak F_T(u)}{w}_{L^2}:=\int_{\Gamma_{u}}\iota_{w}\Upsilon_{P_{T,C}}^*\psi_T.
	 \end{equation*}
Here $\psi_T$ is the Hodge dual $4$-form of the $G_2$-structure $\phi_T$, and $\Gamma_u$ denotes the graph of $u$. The notation $w$ in the integrand is the fiberwise translation of $w\in C^\infty(NP_{T,C})$. The $L^2$ inner product used is defined via the canonical bundle isomorphism $\Theta_{P_T}^{C} : NP_{T,C} \to NP_T$ and the metric on $NP_T$ induced by the $G_2$-metric $g_{\phi_T}$. 
\end{definition}

\begin{definition}\label{def Linearization L_T}The \textbf{linearization} of $\mathfrak F_T$ at the $\beta_T$ from \autoref{eqn betaT} is 
\begin{equation*}
\bD_T:=d\mathfrak F_{T|_{\beta_T}}:C^{\infty}{(NP_{T,C})}\to C^{\infty}{(NP_{T,C})}.\qedhere
\end{equation*}
 \end{definition}
\begin{definition}\label{def decomposition of FT nonlinear map}
Given $T$, the \textbf{error} is $e_T:=\mathfrak F_T(\beta_T)\in C^{\infty}{(NP_{T,C})}$. Define the nonlinear map $Q_T: C^{\infty} (V_{P_{T,C}})\to C^{\infty}{(NP_{T,C})}$ such that 
$$\mathfrak F_T=\bD_{T}+ Q_T+e_T.$$  Note that $Q_T(\beta_T)=-\bD_{T}(\beta_T)$.
\end{definition}

\begin{notation}
	From now on, we will denote the Hölder spaces $C^{k,\gamma}(NP_{T,C})$ simply by $C^{k,\gamma}$, omitting the explicit dependence on $T$, unless otherwise specified. The relevant value of $T$ will always be clear from the context. The same convention applies to other Banach spaces.  Recall the asymptotic rates  $\mu_\pm$ and $\nu_\pm$ from the beginning of this section. Set 
	\begin{equation*}\mu:=\max\{\mu_+,\mu_-\}<0,\ \ \nu:=\max\{\nu_+,\nu_-\}<0.\qedhere\end{equation*}
\end{notation}

\begin{prop}[{\textbf{Error estimate}}]\label{prop error estimate with T}
  For all sufficiently large $T\geq T_0$ and for all $k\in \N\cup\{0\}$, $\gamma\in (0,1)$, we have 
  $$\norm{\mathfrak F_T(\beta_T)}_{C^{k,\gamma}}=\norm{e_T}_{C^{k,\gamma}}= O(e^{-\delta_e T}),$$
  where $\delta_e:=\min\{\delta,-\mu, -\nu\} $ and $\delta$ is the constant from \autoref{thm Kovalev tcs}.
\end{prop}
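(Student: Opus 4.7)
The strategy is to decompose the approximate associative submanifold $P_T$ (equivalently the graph $\Gamma_{\beta_T}$) into three regions according to where the cut-off $\chi_{T-1}$ vanishes, transitions, or equals one, and to estimate $e_T$ separately on each region before assembling the global bound. Concretely, on each side $P_{T,C,\pm}$, use the coordinate $t$ on the end to split into (A) the \emph{deep interior} $t<T-1$, where $\chi_{T-1}=0$ so $\beta_T=\beta_\pm$; (B) the \emph{transition region} $t\in[T-1,T]$, where both the cut-off on $\beta_\pm$ and on $\varrho_\pm$ are active; and (C) the \emph{far cylindrical neck} $t\in[T,T+1]$, where $\chi_{T-1}=1$ so $\beta_T=0$ and $\Gamma_{\beta_T}$ is a piece of the associative cylinder.

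In region (A), $\Gamma_{\beta_T}$ coincides with the corresponding part of $P_\pm$, which is associative with respect to $\phi_\pm$, so $\mathfrak F_\pm(\beta_\pm)=0$ pointwise. The entire error therefore comes from $\phi_T-\phi_\pm$, which here equals $\phi_T-\hat\phi_T$ (since $\hat\phi_T=\phi_\pm$ when $\chi_{T-1}=0$). By \autoref{thm Kovalev tcs}, this difference is $O(e^{-\delta T})$ in $C^{k,\gamma}$, and since $\mathfrak F_T-\mathfrak F_\pm$ depends continuously on $\phi_T-\phi_\pm$, we get $|e_T|_{C^{k,\gamma}}=O(e^{-\delta T})$ on (A). In region (C), $\beta_T=0$ so $\Gamma_{\beta_T}$ is exactly the asymptotic associative cylinder in $\R\times Z_\pm$, which is associative for the product $G_2$-structure $\phi_{0,\pm}=dt\wedge\omega_\pm+\Re\Omega_\pm$; but $\hat\phi_T=\phi_{0,\pm}$ on this region (the cut-off kills $d\varrho_\pm$), so again $\phi_T-\phi_{0,\pm}=\phi_T-\hat\phi_T=O(e^{-\delta T})$ and hence $|e_T|_{C^{k,\gamma}}=O(e^{-\delta T})$ on (C).

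The main obstacle is region (B), where both sources of error interact. There $\beta_T-\beta_\pm=-\chi_{T-1}\beta_\pm$, and by the ACyl decay estimate \autoref{eq ACyl asso} we have $\|\beta_\pm\|_{C^{k+1,\gamma}([T-1,T]\times\Sigma_\pm)}\lesssim e^{\mu_\pm T}$, so $\|\beta_T-\beta_\pm\|_{C^{k+1,\gamma}}\lesssim e^{\mu T}$; on the same region $\phi_T-\phi_\pm=(\phi_T-\hat\phi_T)-d(\chi_{T-1}\varrho_\pm)$ is controlled in $C^{k,\gamma}$ by $O(e^{-\delta T})+O(e^{\nu T})$ since $\varrho_\pm=O(e^{\nu t})$. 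Writing
\[
\mathfrak F_T(\beta_T)=\bigl(\mathfrak F_T(\beta_T)-\mathfrak F_\pm(\beta_T)\bigr)+\bigl(\mathfrak F_\pm(\beta_T)-\mathfrak F_\pm(\beta_\pm)\bigr)+\mathfrak F_\pm(\beta_\pm),
\]
the final term vanishes, the middle term is estimated by applying \autoref{prop linearization asso ACyl}, \autoref{prop ACyl quadratic estimate} and \autoref{prop Holder ACyl quadratic estimate} to the small displacement $\beta_T-\beta_\pm$ around $\beta_\pm$, giving an $O(e^{\mu T})$ bound, while the first term is directly $O(\|\phi_T-\phi_\pm\|_{C^{k,\gamma}})=O(e^{-\delta T})+O(e^{\nu T})$. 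Since $\nu\le\mu$ so $e^{\nu T}\le e^{\mu T}$, the combined bound on (B) is $O(e^{\mu T})+O(e^{-\delta T})\lesssim e^{-\delta_e T}$ with $\delta_e=\min\{\delta,-\mu\}$. Assembling the three regions (using that in (B) the fixed-width annulus makes weighted and unweighted $C^{k,\gamma}$-norms equivalent up to constants uniform in $T$) yields $\|e_T\|_{C^{k,\gamma}}\lesssim e^{-\delta_e T}$, as required.
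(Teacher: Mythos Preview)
Your argument is correct and follows essentially the same strategy as the paper: localize to the region where $\beta_T$ differs from $\beta_\pm$ and control the two sources of error (the $G_2$-structure perturbation and the cut-off of $\beta_\pm$) separately. The paper's version is terser—it splits only into the interior $P_{T-1,C,\pm}$ and the annuli $A_{T,\pm}=P_{T,C,\pm}\setminus P_{T-1,C,\pm}$, and on the annuli compares $\psi_T$ directly to the cylindrical $\psi_0$ rather than to $\psi_\pm$—but the content is the same. In fact your treatment of region~(A) is slightly more precise: the paper writes ``$\mathfrak F_T(\beta_T)=0$ over $P_{T-1,C,\pm}$'', which tacitly ignores the $O(e^{-\delta T})$ contribution from $\phi_T\neq\phi_\pm$ that you track explicitly; this makes no difference to the final bound since that term is already dominated by $e^{-\delta_e T}$.
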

\begin{proof}Denote by $\phi_0$ the asymptotic $G_2$-structure on $\R^+\times Z_\pm$. Denote by $\psi_\pm$ and $\psi_0$ the Hodge dual of the $G_2$-structures $\phi_\pm$ and $\phi_0$, respectively. Set $A_{T,\pm}:=P_{T,C,\pm}\setminus P_{T-1,C,\pm}$. By \autoref{thm Kovalev tcs}, for all sufficiently large $T\geq T_0$, we have 
$$\abs{\Upsilon_{P_{T,C,\pm}}^*\psi_T-\psi_\pm}=O(e^{-\delta T}) \quad \text{over} \quad P_{T-1,C,\pm} ,$$  
and 
$$\abs{\Upsilon_{P_{T,C,\pm}}^*\psi_T-\Upsilon_*\psi_0}=O(e^{-\delta T})+O(e^{\nu T}) \quad \text{over} \quad A_{T,\pm} .$$  
 Since $P_\pm$ are associative submanifolds with respect to $\phi_\pm$, for all $w\in C^{\infty}(NP_{T,C})$, we have 
$$\inp{\mathfrak F_T(\beta_T)}{w}_{L^2}\leq(O(e^{-\delta T}+e^{\nu T})\norm{w}_{L^2}+\int_{A_{T,+}}\iota_{w}\Upsilon_*\psi_++\int_{A_{T,-}}\iota_{w}\Upsilon_*\psi_-.$$
Thus, $$\abs{\mathfrak F_T(\beta_T)}\lesssim O(e^{-\delta T}+e^{\nu T}+ \max \norm {\beta_T-\beta_\pm}_{C^1({A_{T,\pm}})})= O(e^{-\delta_e T}).$$ 
Estimates for higher derivatives and Hölder norms of $\mathfrak F_T(\beta_T)$ also hold similarly.
\end{proof}

\begin{prop}[{\textbf{Quadratic estimate}}]\label{prop quadratic estimate with T}There is a constant $C>0$ such that for all sufficiently large $T\geq T_0$ and for all $u, v\in C^{\infty}(V_{P_{T,C}})$, $\eta \in C^{\infty}(NP_{T,C})$, we have
	\begin{enumerate}[label=(\roman*), leftmargin=*]	
\item $\abs{d\mathfrak F_{T|_{u}}(\eta)-d\mathfrak F_{T|_{v}}(\eta)}\leq C (\abs{u-v}+\abs{\nabla^\perp (u-v)})(\abs{\eta}+\abs{\nabla^\perp \eta})$, 
\item $\abs{Q_T(u)-Q_T(v)}\leq C (\abs{u-v}+\abs{\nabla^\perp (u-v)})(\abs{u-\beta_T}+\abs{\nabla^\perp (u-\beta_T)}+\abs{v-\beta_T}+\abs{\nabla^\perp (v-\beta_T)})$,
\item $\norm{Q_T(u)-Q_T(v)}_{C^{k,\gamma}}\leq C\norm{u-v}_{C^{k+1,\gamma}}(\norm{u-\beta_T}_{C^{k+1,\gamma}}+\norm{v-\beta_T}_{C^{k+1,\gamma}}), \forall k\in \N\cup\{0\}, \gamma\in (0,1).$
\end{enumerate}
\end{prop}

To proceed with the proof, we make use of the following lemma, the proof of which can be found in \cite[Lemma A.1]{Bera2023}.

\begin{lemma}\label{lem quadratic}There is a constant $C>0$ such that for all sufficiently large $T\geq T_0$ and for all $u, v,s \in C^{\infty}(V_{P_{T,C}})$, $w \in C^{\infty}(NP_{T,C})$, over $\Gamma_s:=\graph s\subset V_{P_{T,C}}$ we have
 $$\abs{\iota_w\mathcal L_u\mathcal L_v(\Upsilon_{P_{T,C}}^*\psi_T)}\leq C \abs{w}  (\abs{u}+\abs{\nabla^\perp u})(\abs{v}+\abs{\nabla^\perp v}).$$
\end{lemma}

\begin{proof}[{\normalfont{\textbf{Proof of \autoref{prop quadratic estimate with T}}}}] 
For all $w\in C_c^\infty(NP_{T,C})$  we compute:
\begin{align*}\inp{d\mathfrak F_{T|_{u}}(\eta)-d\mathfrak F_{T|_{v}}(\eta)}{w}_{L^2}
&=\int_0^1\Big(\frac d{dt}\int_{\Gamma_{tu+(1-t)v}} \cL_{\eta}\iota_w(\Upsilon_{P_{T,C}}^*\psi_T)\Big)dt\\
&=\int_0^1\int_{\Gamma_{tu+(1-t)v}} \cL_{(u-v)}\cL_{\eta}\iota_w(\Upsilon_{P_{T,C}}^*\psi_T)dt.
\end{align*}
Since $u, v,w$ and $\eta$ in the integrand are fiberwise translations,  $[u-v, w]=0$ and $[\eta,w]=0$, and therefore the last expression is same as 
$$\int_0^1\int_{\Gamma_{tu+(1-t)v}} \iota_w \cL_{(u-v)} \cL_{\eta}(\Upsilon_{P_{T,C}}^*\psi_T)dt.$$
The required estimate in (i) now follows from \autoref{lem quadratic}. To see (ii) we compute:
\[Q_T(u)-Q_T(v) =\int_0^1dQ_{T|_{tu+(1-t)v}}(u-v)dt
 =\int_0^1\big(d\mathfrak F_{T|_{tu+(1-t)v}}(u-v)-d\mathfrak F_{T|_{\beta_T}}(u-v)\big)dt.\]
Then (ii) follows from (i).  The estimates in (iii) with the H\"older norms follow by the same kind of argument. 
\end{proof}

The rest of the subsection deals with the linearization $\bD_T$ defined in \autoref{def Linearization L_T}.

\begin{prop}\label{prop self-adjoint T}
For all sufficiently large $T\geq T_0$  the linearization $\bD_T$ is a formally self-adjoint elliptic operator . 
\end{prop}
\begin{proof}For all $v,w\in C^{\infty}(N{P_{T,C}})$,
\begin{align*}
\inp{\bD_{T}v}{w}_{L^2}-\inp{w}{\bD_{T}v}_{L^2}&=\int_{\Gamma_{\beta_T}}\mathcal L_v\iota_w(\Upsilon_{P_{T,C}}^*\psi_T)-\mathcal L_w\iota_v(\Upsilon_{P_{T,C}}^*\psi_T)\\
&=\int_{\Gamma_{\beta_T}}\iota_w \iota_v(\Upsilon_{P_{T,C}}^*d\psi_T)+{\iota_{[v,w]}(\Upsilon_{P_{T,C}}^*\psi_T)}.
\end{align*}
Since $\phi_T$ is a torsion free $G_2$-structure, $d\psi_T=0$. Since $v$ and $w$ in the integrand are fiberwise translations, $[v,w]=0$. Hence $\bD_{T}$ is formally self-adjoint.

 It remains to prove that $\bD_T$ is an elliptic operator for all sufficiently large $T\geq T_0$. We denote the restrictions of $\bD_{T}$ and $\beta_T$ over $P_{T,C,\pm}$ by $\bD_{T,\pm}$ and $\beta_{T,\pm}$, respectively. 
By \autoref{thm Kovalev tcs} $\nabla^k(\phi_T-\phi_\pm)=O(e^{-\delta T})$ with $\delta>0$ and by definition $\nabla^k(\beta_T-\beta_\pm)=O(e^{\mu T})$ for all $k\in \N\cup \{0\}$.  Therefore, by \autoref{prop quadratic estimate with T}(i) and a similar kind of argument presented in the proof of \autoref{prop error estimate with T} implies (under appropriate canonical bundle isomorphisms) : for all $u\in C^{\infty}{(NP_{T,C,\pm})}$
\begin{equation}\label{eq comparison of linearizations}
\bD_{T,\pm}u=\bD_{P_\pm} u+O(e^{\max\{\mu,-\delta\} T})(\abs{u}+\abs{\nabla^\perp u}),
\end{equation}
 where $\bD_{P_\pm}$ are Fueter operators defined in \autoref{eq Fueter operator}. Hence, for all sufficiently large $T\geq T_0$, $\bD_T$ is an elliptic operator.
\end{proof}

\begin{prop}[{\textbf{Schauder estimate}}]\label{prop Schauder estimate}There exists a constant $C>0$ such that for all sufficiently large $T\geq T_0$,  $k\in \N\cup\{0\}$, $\gamma\in (0,1)$ and for all  $u\in C^{k+1,\gamma}$, we have		
$$\norm{u}_{C^{k+1,\gamma}}\leq C (\norm{\bD_{T}u}_{C^{k,\gamma}}+\norm{u}_{L^\infty}).$$
\end{prop}
\begin{proof}For $u\in C^{k+1,\gamma}$, define $u_\pm\in C^{k+1,\gamma}({NP_{T,C,\pm}})$ by restricting $u$ over $P_{T,C,\pm}$. Using interior Schauder estimates for $\bD_{P_\pm}$ applied to $u_\pm$ we get
$$\norm{u}_{C^{k+1,\gamma}}\lesssim \norm{\bD_{P_+}u_+}_{C^{k,\gamma}}+\norm{\bD_{P_-}u_-}_{C^{k,\gamma}}+\norm{u}_{L^\infty},$$
 where $\bD_{P_\pm}$ are the Fueter operators defined in \autoref{eq Fueter operator}. Similarly to \autoref{eq comparison of linearizations} we obtain 
\begin{align*}
	\norm{\bD_{T}u_\pm-\bD_{P_\pm}u_\pm}_{C^{k,\gamma}}&\lesssim \norm{u_\pm}_{C^{k+1,\gamma}}\norm{\beta_T-\beta_\pm}_{C^{k+1,\gamma}}+O(e^{-\delta T})\norm{u}_{C^{k+1,\gamma}}\\
	&\lesssim (O(e^{-\delta T})+O(e^{\mu T}))\norm{u}_{C^{k+1,\gamma}}.
\end{align*}
Here $\delta$ is the constant from \autoref{thm Kovalev tcs}. Hence, we get the required estimate.
\end{proof}
The final estimate we require is a uniform linear estimate for $\bD_T$ valid for all sufficiently large $T$, providing a positive lower bound depending only on $T$. However, elements in $\ker \bD_{P_\pm,0}$ whose asymptotic limits match via $f$ form an approximate kernel where such an estimate cannot hold. We therefore restrict to its complement, analogous to \cite[Theorem 3.24]{SaEarp2013}. This is precisely the reason for imposing \autoref{hyp gluing}, which is equivalent to requiring that the approximate kernel vanishes. Under this assumption, the desired lower bound holds on all of $C^{k+1,\gamma}$. Then \autoref{prop self-adjoint T} implies that $\bD_T$ is invertible, with a uniformly bounded inverse depending only on $T$, which suffices for the proof of the gluing theorem.

\begin{definition}\label{def approx kernel of L_T} 
We define	
\begin{enumerate}[label=(\roman*), leftmargin=*]
\item the \textbf{matching kernel} $\mathcal K^{\mathfrak m}$ by
$$\mathcal K^{\mathfrak m}:=\{(u_{+},u_{-})\in\ker\bD_{P_+,0}\times \ker\bD_{P_-,0}:f_*\iota^+_\infty u_{+}=\iota^-_\infty u_{-}\},$$
\item the \textbf{approximate kernel} of $\bD_{T}$ by
$$\mathcal K^{\mathfrak m}_{T}:=\{u_+\#_Tu_-\in C^\infty(NP_{T,C}):(u_{+},u_{-})\in\mathcal K^{\mathfrak m}\},$$
where $ u_+\#_Tu_-$ over $P_{T,C,\pm}$ is
 $$ u_+\#_Tu_-:=u_\pm-\chi_{T-1}(u_\pm-u_\mp),$$
\item the \textbf{complement of the approximate kernel} by 
$$\mathcal X_{T}^{k+1,\gamma}:=\{u\in C^{k+1,\gamma}:\inp{u}{\xi}_{L^2(K_{P_\pm})}=0, \forall  \xi\in\mathcal K^{\mathfrak m}_{T}\},$$
where $K_{P_\pm}$ are the compact submanifolds of $P_{\pm}$ with boundary from \autoref{def ACyl asso}. Since by \autoref{lem fourier series Donaldson Acyl}(ii) $\ker \bD_{P_\pm,0}$ are finite dimensional, unique continuation theorem implies: $C^{k+1,\gamma}=\mathcal K^{\mathfrak m}_{T} \oplus \mathcal X_{T}^{k+1,\gamma}.$ \qedhere
\end{enumerate}
\end{definition}
The following proposition provides the desired linear estimate. We include the proof, following the same line of argument as in \cite[Theorem 3.24]{SaEarp2013}, but with additional detail, as this estimate plays a central role in the proof of the gluing theorem.

\begin{prop}[{\textbf{Linear estimate}}]\label{prop linear estimate with T} For all $\epsilon>0$ there exists $T_\epsilon\geq T_0$ such that for all $T\geq T_\epsilon$, $k\in \N\cup\{0\}$, $\gamma\in (0,1)$ and for all $u\in \mathcal X_{T}^{k+1,\gamma}$,
	we have
	$$\norm{u}_{C^{k+1,\gamma}}\lesssim e^{\epsilon T}\norm{\bD_{T}u}_{C^{k,\gamma}}.$$  
\end{prop}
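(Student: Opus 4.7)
The plan is to argue by contradiction using a concentration-compactness argument across the neck.

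Suppose the estimate fails for some $\Lambda>0$. Then one finds sequences $T_n\to\infty$ and $u_n\in\mathcal X_{T_n}^{k+1,\gamma}$ with $\norm{u_n}_{C^{k+1,\gamma}}=1$ but $\norm{\mathfrak L_{T_n}u_n}_{C^{k,\gamma}}\to 0$ faster than $e^{-\Lambda T_n}$. The Schauder estimate of \autoref{prop Schauder estimate} then forces $\norm{u_n}_{L^\infty}\geq c>0$ for some $c>0$ and all large $n$; pick $x_n\in P_{T_n,C}$ with $\abs{u_n(x_n)}\geq c/2$. The compact cores $K_{P_{C,\pm}}$ sit inside every $P_{T_n,C}$, and on exhaustions $K_{P_{C,\pm}}\cup\Upsilon_\pm((T_0,R]\times\Sigma_\pm)$ for fixed $R$, \autoref{eq comparison of linearizations} yields $\mathfrak L_{T_n}\to\mathfrak L_{P_\pm}$. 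Local Schauder estimates combined with Arzel\`a--Ascoli and a diagonal argument produce a subsequence converging in $C^{k+1,\gamma'}_{\mathrm{loc}}$ ($\gamma'<\gamma$) to limits $u_\pm\in C^{k+1,\gamma}\cap L^\infty$ on $P_{C,\pm}$ satisfying $\mathfrak L_{P_\pm}u_\pm=0$; the canonical isomorphism $\Theta^C_{P_\pm}$ together with \autoref{prop elliptic regularity Acyl} places these in $\ker\mathbf D_{P_\pm,0}$.

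The key step is to verify the asymptotic matching $f_*\iota^+_\infty u_+=\iota^-_\infty u_-$. On the neck $[T_0,2T_n+1-T_0]\times\Sigma$, \autoref{prop asymptotically translation elliptic operator} states that $\mathfrak L_{T_n}$ is exponentially close to the translation-invariant Fueter operator $\mathbf D_C$. Expanding the restriction of $u_n$ along the indicial roots as in \autoref{lem fourier series Donaldson Acyl} and sending $n\to\infty$, the rate-$0$ Fourier component at the left end of the neck stabilizes to $\iota^+_\infty u_+$ by the very definition in \autoref{def asymp limit map acyl}, while the same analysis from the right end produces $\iota^-_\infty u_-$; the gluing identification $F$ couples the two sides via $f$, whence $f_*\iota^+_\infty u_+=\iota^-_\infty u_-$. \autoref{hyp gluing} now forces this common element into $\operatorname{im}(f_*\iota^+_\infty)\cap\operatorname{im}\iota^-_\infty=\{0\}$, and injectivity of $\iota^\pm_\infty$ gives $u_\pm=0$.

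It remains to contradict $\abs{u_n(x_n)}\geq c/2$. If, after passing to a subsequence, $x_n$ stays in the compact core $K_{P_\pm}$, then local $C^0$ convergence gives $\abs{u_\pm(x_\infty)}\geq c/2$, contradicting $u_\pm=0$. Otherwise $x_n$ lies in the neck with $t(x_n)\to\infty$ and $2T_n+1-t(x_n)\to\infty$; translate along the cylindrical coordinate so that $x_n$ sits near $\{0\}\times\Sigma$ in a fixed copy of $\R\times\Sigma$ and extract a limit $v\in\ker\mathbf D_C$ with $\norm{v}_{L^\infty}\leq 1$ and $\abs{v}\geq c/2$ at the limit of $x_n$. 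By \autoref{prop no sum homogeneous acyl}, any bounded element of $\ker\mathbf D_C$ on the full cylinder is translation-invariant and lies in $V_0=\ker\mathbf D_\Sigma$; tracking $v$ through the translation limits from the $P_\pm$ sides identifies it with the common asymptotic profile computed above, hence $v=0$, a contradiction. The main obstacle is the asymptotic matching step: controlling the indicial-root decomposition of $u_n$ on the expanding neck of length $\sim 2T_n$ uniformly in $n$, and arguing that the rate-$0$ Fourier coefficient survives the limit. The factor $e^{\Lambda T}$ encodes precisely the loss of precision at this step --- one cannot beat the smallest positive indicial root, but any strictly smaller $\Lambda$ suffices.
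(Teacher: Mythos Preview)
Your overall architecture (contradiction, local extraction of limits $u_\pm\in\ker\mathfrak L_{P_\pm,0}$, matching of asymptotic profiles, contradiction) matches the paper's, but the step you yourself flag as the ``main obstacle'' is a genuine gap. On the neck, $u_n$ is not in $\ker\mathbf D_C$, so \autoref{lem fourier series Donaldson Acyl} does not give you an indicial decomposition of $u_n$; and your translation argument in the neck case needs $u_n(t(x_n),\cdot)$ to be close to $\iota^+_\infty u_+$, which is exactly the statement that $u_{n,\pm}\to u_\pm$ \emph{uniformly} on $P_{T_n,C,\pm}$, not just locally. Without that uniform convergence you can neither verify $f_*\iota^+_\infty u_+=\iota^-_\infty u_-$ nor identify the translated limit $v$ with the asymptotic profile, so the contradiction does not close.

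The paper supplies precisely this missing ingredient: it fixes a small weight $0<s<\Lambda$, sets $\tilde u_{n,\pm}:=(1-\chi_{\frac32 T_n})u_n$, and uses the weighted elliptic theory for $\mathfrak L_{P_\pm}$ to show $\norm{\mathfrak L_{P_\pm}\tilde u_{n,\pm}}_{C^{k,\gamma}_{P_\pm,s}}\to 0$ (this is where the hypothesis $\norm{\mathfrak L_{T_n}u_n}\lesssim e^{-\Lambda T_n}$ and $s<\Lambda$ are used). Invertibility of $\mathfrak L_{P_\pm,s}$ modulo its finite-dimensional kernel then produces $v_{n,\pm}\in\ker\mathfrak L_{P_\pm,0}$ with $\norm{u_{n,\pm}-v_{n,\pm}}_{C^{k+1,\gamma}_{P_{T_n,C,\pm}}}\to 0$, and finite-dimensionality upgrades local convergence of $v_{n,\pm}$ to global convergence. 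This yields $\norm{u_{n,\pm}-u_\pm}_{L^\infty(NP_{T_n,C,\pm})}\to 0$, after which matching follows by simply evaluating at $(T_n,z)$, and the contradiction $1=\norm{u_n}_{L^\infty}\to 0$ is immediate with no neck case analysis. Note also that the paper concludes $u_\pm=0$ from the orthogonality built into $\mathcal X_T^{k+1,\gamma}$ rather than invoking \autoref{hyp gluing}; under the standing assumption of the section these agree since $\mathcal K^{\mathfrak m}=0$, but the paper's formulation is what makes the proposition hold as stated.
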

\begin{proof} By the Schauder estimate in \autoref{prop Schauder estimate}, we see it is enough to prove that for all sufficiently large $T\geq T_0$ and for  all $u\in\mathcal X_{T}^{k+1,\gamma}$,
	we have
	$$\norm{u}_{L^\infty}\lesssim e^{\epsilon T}\norm{\bD_{T}u}_{C^{k,\gamma}}.$$
	We will prove this by contradiction. Suppose this is not true, then there exists an increasing sequence $T_n\geq T_0$ which tends to $\infty$ as $n\to\infty$ and $u_n$ in $\mathcal X_{T_n}^{k+1,\gamma}$  such that 
	$$\norm{u_n}_{L^\infty}=1,\ 
	 e^{\epsilon T_n}\norm{\bD_{T_n}u_n}_{C^{k,\gamma}}\to 0 \ \text{as}\ n\to\infty.$$
	Define $u_{n,\pm}\in C^{k+1,\gamma}({NP_{T_n,C,\pm}})=:C^{k+1,\gamma}_{P_{T_n,C,\pm}}$ by restricting $u_n$ over $P_{T_n,C,\pm}$.
	Again by the Schauder estimate in \autoref{prop Schauder estimate}, we see that $\norm{u_n}_{C^{k+1,\gamma}}$ is bounded and hence  $\norm{u_{n,\pm}}_{C^{k+1,\gamma}}$ are also bounded.  A moment's thought shows that by the Arzelà-Ascoli theorem, there exists a subsequence which we call again $u_n$, and there exist $u_\pm$ in $C^{k+1,\gamma/ 2}_{P_\pm,0}$ such that $\bD_{P_\pm}u_\pm=0$ and $u_{n,\pm}\to u_\pm$ in $C^{k+1,\gamma/2}_{P_\pm, \operatorname{loc}}$. Moreover, by the elliptic regularity \cite[Proposition 4.14]{Bera2025}, we get $u_\pm\in  C^{k+1,\gamma}_{P_\pm,0}$ and therefore $u_\pm\in \ker \bD_{P_\pm,0}$. By taking further subsequences if necessary we claim that as $n\to\infty$,
 \begin{equation}\label{eq:main linear estimate}
 \norm{ u_{n,\pm}-u_{\pm}}_{L^\infty(NP_{T_n,C,\pm})}\leq \norm{ u_{n,\pm}-u_{\pm}}_{C^{k+1,\gamma}_{P_{T_n,C,\pm}}}\to 0.
 \end{equation}
To prove this claim we argue as follows. Set $\widetilde u_{n,\pm}:= (1-\chi_{\frac 32 T_n})u_n \in C^{k+1,\gamma}_{P_\pm}$. Again by \autoref{eq comparison of linearizations}, we obtain that for any sufficiently small $0<s<\epsilon$ we have 
	\begin{align*}
	\norm{\bD_{P_\pm}\widetilde u_{n,\pm}}_{C^{k,\gamma}_{P_\pm,s}}&\lesssim e^{-(\delta+s) T_n}\norm{u_n}_{C^{k+1,\gamma}}+e^{(\mu-s) T_n}\norm{u_n}_{C^{k+1,\gamma}}+\norm{\bD_{T_n}\widetilde u_{n,\pm}}_{C^{k,\gamma}}\\
	&\lesssim (e^{-(\delta+s) T_n}+e^{(\mu-s) T_n}+e^{-\frac 32s T_n})\norm{u_n}_{C^{k+1,\gamma}}+\norm{\bD_{T_n} u_{n}}_{C^{k,\gamma}}\\
		&\lesssim e^{-(\delta+s) T_n}+e^{(\mu-s) T_n}+e^{-\frac 32s T_n}+ e^{-\epsilon T_n}.
	\end{align*}	 
By \autoref{lem fourier series Donaldson Acyl}(ii) and \autoref{prop main Fredholm acyl}, there exists $v_{n,\pm}\in \ker\bD_{P_\pm,s}=\ker\bD_{P_\pm,0}$ such that 
$$\norm{\widetilde u_{n,\pm}-v_{n,\pm}}_{C^{k+1,\gamma}_{P_\pm,s}}\lesssim  e^{-(\delta+s) T_n}+e^{(\mu-s) T_n}+e^{-\frac 32s T_n}+ e^{-\epsilon T_n}.$$
Hence as  $n\to\infty,$
$$\norm{u_{n,\pm}-v_{n,\pm}}_{C^{k+1,\gamma}_{P_{T_n,C,\pm}}}\lesssim  e^{-\delta T_n}+e^{\mu T_n}+e^{-\frac s2 T_n}+ e^{(s-\epsilon)T_n}
\to 0.$$ 
As $\ker\bD_{P_\pm,0}$ is finite dimensional, the norms $\norm{\cdot}_{C^{k+1,\gamma /2}_{K_{P\pm}}}$ and $\norm{\cdot}_{C^{k+1,\gamma}_{P_\pm}}$ are equivalent on it. Taking further subsequence yields $\norm{ v_{n,\pm}-u_\pm}_{C^{k+1,\gamma/2}_{K_{P\pm}}}\to 0$ and hence as $n\to \infty$, $\norm{ v_{n,\pm}-u_\pm}_{C^{k+1,\gamma}_{P_\pm}}\to 0 $. This proves \autoref{eq:main linear estimate}.

 Moreover \autoref{eq:main linear estimate} implies that $f_*\iota^+_\infty u_{+}=\iota^-_\infty u_{-}$, that is $(u_{+},u_{-})\in\mathcal K^{\mathfrak m}$. Indeed, for all $z\in \Sigma_+$,
$$f_*\iota^+_\infty u_{+}(z)=\lim_{n\to \infty}F_*u_{n,+}(T_n,z)=\lim_{n\to \infty}u_{n,-}(T_n,f(z))=\iota^-_\infty u_{-}(f(z)).$$
As $u_n\in \mathcal X_{T_n}^{k+1,\gamma}$, we have 
$$\norm{u_\pm}_{L^2(K_{P_\pm})}=\inp{u_\pm-u_{n,\pm}}{u_\pm}_{L^2(K_{P_\pm})}\leq \norm{u_{n_\pm}-u_\pm}_{L^\infty(K_{P_\pm})}\norm{u_\pm}_{L^1(K_{P_\pm})}\to 0.$$
Since $u_\pm\in \ker \bD_{P_\pm,0}$, by unique continuation, $u_\pm=0$, which is a contradiction because as $n\to \infty$, 
 \begin{equation*}
 	1=\norm{u_n}_{L^\infty}\leq \norm{ u_{n,+}}_{L^\infty(NP_{T_n,C,+})}+\norm{ u_{n,-}}_{L^\infty(NP_{T_n,C,-})}\to 0.\qedhere
 	 \end{equation*}
\end{proof}

\subsection{Proof of the gluing theorem}\label{subsec Proof of the gluing theorem}
To prove the gluing theorem, we solve the nonlinear equation $\mathfrak{F}_T u = 0$ for $u$ sufficiently close to $\beta_T$, using the following application of the Banach contraction principle \cite[Lemma 7.2.23]{Donaldson1990}.

 \begin{lemma}\label{lem Banach contraction acyl}
	Let $\mathcal X,\mathcal Y$ be two Banach spaces and let $V\subset \mathcal X$ be a neighbourhood of $0\in \mathcal X$. Let $x_0\in V$. Let $F:V\to \mathcal Y$ be a smooth map of the form
	$$F(x)=L(x)+Q(x)+F(x_0)   \quad \big(\text{so}\ \ Q(x_0)=-L(x_0)\big)$$
	such that:
	\begin{itemize}
		\item $L:\mathcal X\to \mathcal Y$ is a linear bounded invertible operator and there exists a constant $c_L>0$ such that for all $x\in \mathcal X$, $\norm{x}_{\mathcal X}\leq c_L\norm{Lx}_{\mathcal Y}.$
		\item $Q:V\to \mathcal Y$ is a smooth map and there exists a constant $c_Q>0$ such that for all $x_1,x_2\in V$, 
		$$\norm{Q(x_1)-Q(x_2)}_\mathcal Y\leq c_Q\norm{x_1-x_2}_\mathcal X(\norm{x_1-x_0}_\mathcal X+\norm{x_2-x_0}_\mathcal X).$$
			\end{itemize}
			
	If  $\norm{F(x_0)}_{\mathcal Y}\leq\frac 1{10 c_L^2c_Q}$ and $B(x_0,\frac 1{5 c_Lc_Q})\subset V$, then there exists a unique $x\in \mathcal X$ with $\norm{x-x_0}_{\mathcal X}\leq\frac 1{5 c_Lc_Q}$ solving $F(x)=0$.
\end{lemma}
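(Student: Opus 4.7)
The plan is to recast the equation $F(x)=0$ as a fixed point equation and apply the Banach contraction principle on the closed ball $\bar B(x_0,r)$ where $r:=\frac{1}{5c_Lc_Q}$. Since $L$ is a bounded invertible operator, I will rewrite $F(x)=0$ as $L(x)=-Q(x)-F(x_0)$ and hence as the fixed point problem $x=T(x)$ where $T:V\to\mathcal X$ is defined by
\[
T(x):=-L^{-1}\bigl(Q(x)+F(x_0)\bigr).
\]
A solution of $F(x)=0$ is precisely a fixed point of $T$. Note that evaluating the identity $F(x)=L(x)+Q(x)+F(x_0)$ at $x=x_0$ forces the consistency relation $L(x_0)+Q(x_0)=0$, which I will use below.

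Next I will verify that $T$ maps $\bar B(x_0,r)$ into itself. First, using the consistency relation $Q(x_0)=-L(x_0)$, I compute $T(x_0)-x_0=-L^{-1}\bigl(Q(x_0)+F(x_0)\bigr)-x_0=-L^{-1}F(x_0)$, so by the bound on $L^{-1}$ and the hypothesis on $\|F(x_0)\|_\mathcal Y$,
\[
\|T(x_0)-x_0\|_\mathcal X\leq c_L\|F(x_0)\|_\mathcal Y\leq \tfrac{1}{10c_Lc_Q}.
\]
Second, for $x_1,x_2\in\bar B(x_0,r)$ the quadratic estimate on $Q$ yields
\[
\|T(x_1)-T(x_2)\|_\mathcal X\leq c_L\|Q(x_1)-Q(x_2)\|_\mathcal Y\leq c_Lc_Q\cdot 2r\cdot\|x_1-x_2\|_\mathcal X=\tfrac{2}{5}\|x_1-x_2\|_\mathcal X,
\]
so $T$ is a contraction with Lipschitz constant $\tfrac25$. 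Combining these two bounds, for any $x\in \bar B(x_0,r)$,
\[
\|T(x)-x_0\|_\mathcal X\leq \|T(x)-T(x_0)\|_\mathcal X+\|T(x_0)-x_0\|_\mathcal X\leq \tfrac{2}{5}r+\tfrac{1}{10c_Lc_Q}=\tfrac{9}{50c_Lc_Q}<r,
\]
so $T$ indeed preserves $\bar B(x_0,r)$, which is a complete metric space (closed ball in a Banach space) and sits inside $V$ by assumption.

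Finally I will invoke the Banach fixed point theorem to conclude the existence of a unique fixed point $x\in\bar B(x_0,r)$ of $T$, equivalently a unique solution of $F(x)=0$ in that ball. The argument involves no real obstacle; it is essentially the standard Newton/Picard iteration setup. The only subtle point to be careful about is that the hypothesized form $F(x)=L(x)+Q(x)+F(x_0)$ for \emph{all} $x\in V$ implicitly encodes the consistency $L(x_0)+Q(x_0)=0$, which is what allows $T(x_0)-x_0$ to be expressed cleanly as $-L^{-1}F(x_0)$ and makes the ``smallness of the error'' hypothesis directly effective. The constants $\tfrac{1}{10c_L^2c_Q}$ and $\tfrac{1}{5c_Lc_Q}$ in the statement are chosen precisely so that the contraction factor is $\tfrac{2}{5}$ and the image radius is strictly less than $r$.
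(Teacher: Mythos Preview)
Your proof is correct and follows essentially the same route as the paper: both recast $F(x)=0$ as a fixed-point problem for the map $x\mapsto -L^{-1}(Q(x)+F(x_0))$ and apply the Banach contraction principle, the only cosmetic difference being that the paper translates to work on $B(0,\epsilon)$ with $\epsilon=\tfrac{1}{5c_Lc_Q}$ while you work directly on $\bar B(x_0,r)$. Your explicit observation that the hypothesis forces $L(x_0)+Q(x_0)=0$ is exactly what the paper uses implicitly, and your contraction constant $\tfrac{2}{5}$ and image-radius bound $\tfrac{9}{50c_Lc_Q}$ are consistent with (in fact slightly different from but equally sufficient as) the paper's stated $\tfrac{1}{2}$ and $\tfrac{7}{10}\epsilon$.
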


\begin{proof}[{{{\normalfont{\textbf{Proof of \autoref{thm gluing associative submanifolds}}}}}}]

The nonlinear map $\mathfrak F_T: C^{k+1,\gamma} (V_{P_{T,C}})\to C^{k,\gamma}{(NP_{T,C})}$ has been expressed in \autoref{def nonlinear map with T} as 
 $$\mathfrak F_T=\bD_{T}+ Q_T+\mathfrak F_T(\beta_T).$$ 
The \autoref{hyp gluing} implies that the matching kernel $\mathcal K^{\mathfrak m}=0$. Therefore by \autoref{prop linear estimate with T}, there exists $T_0^\prime\geq T_0$ such that for all $T\geq T_0^\prime$, we have $\bD_T:C^{k+1,\gamma}{(NP_{T,C})}\to C^{k,\gamma}{(NP_{T,C})}$ is an invertible operator and there exists a positive constant $c_{\bD_{T}}=O(e^{\frac{\delta_e}4T})$ such that for all $u\in C^{k+1,\gamma}$,
	we have
	$$\norm{u}_{C^{k+1,\gamma}}\leq c_{\bD_{T}} \norm{\bD_{T}u}_{C^{k,\gamma}}.$$ 
Here $\delta_e$ is the constant from	\autoref{prop error estimate with T}. By \autoref{prop quadratic estimate with T} there exists a positive constant $c_{Q_T}=O(1)$ such that for all $u,v \in C^{k+1,\gamma} (V_{P_{T,C}})$ we have
	$$\norm{Q_T(u)-Q_T(v)}_{C^{k,\gamma}}\leq c_{Q_T}\norm{u-v}_{C^{k+1,\gamma}}(\norm{u-\beta_T}_{C^{k+1,\gamma}}+\norm{v-\beta_T}_{C^{k+1,\gamma}}).$$
Therefore by \autoref{prop error estimate with T}, there exists $T_0^{\prime\prime}\geq T_0^\prime$ such that for all $T\geq T_0^{\prime\prime}$, we have $$\norm{\mathfrak F_T(\beta_T)}_{C^{k,\gamma}}=O(e^{-\delta_eT})\leq \frac 1{10 c_{\bD_{T}}^2c_{Q_T}}$$
and hence by \autoref{lem Banach contraction acyl}, there exists a unique family $\{\widetilde{\beta}_T\in C^{{k+1,\gamma}} (V_{P_{T,C}}):T\geq T_0^{\prime\prime}\}$ such that $\mathfrak F_T(\widetilde{\beta}_T)=0$ and 
\begin{equation}\label{eq gluing thm close}
	\norm{\widetilde{\beta}_T-{\beta}_T}_{C^{k+1,\gamma}}=O(e^{-\frac{\delta_e}4T}).
\end{equation}
 Thus we obtain a family of closed associative submanifolds $\widetilde P_T:=\Upsilon_{P_{T,C}}(\widetilde{\beta}_T)$.  
  
  It remains to prove that $\widetilde P_T$ is rigid, that is $d\mathfrak F_{T|_{\widetilde{\beta}_T}}$ is invertible. 
  We choose $\epsilon:=\frac{\delta_e}8$. Then by \autoref{prop linear estimate with T}, there exists $T_\epsilon>0$ such that for all $T\geq T_\epsilon$, $\norm {\bD_T^{-1}}=O(e^{\epsilon T})$. Now by \autoref{prop quadratic estimate with T} and \autoref{eq gluing thm close},
$$\norm {\bD_T^{-1}d\mathfrak F_{T|_{\widetilde{\beta}_T}}-\id}\leq \norm {\bD_T^{-1}}\norm {d\mathfrak F_{T|_{\widetilde{\beta}_T}}-\bD_T}=O\big(e^{(\epsilon-\frac{\delta_e}4)T}\big)=O(e^{-\frac{\delta_e}8T}).$$ 
Therefore by defining $T_1:= \max{\{T_\epsilon,T_0^{\prime\prime}\}}$, we obtain the required theorem.
 \end{proof}

\section{Associative submanifolds by gluing ACyl holomorphic curves}\label{sec Associative submanifolds by gluing of ACyl holomorphic curves}
In this section, we rephrase \autoref{hyp gluing} in the setting where the ACyl associative submanifolds are products of ACyl holomorphic curves with the unit circle as in \autoref{eg ACyl asso from ACyl holo}(i). In particular, using \autoref{thm gluing associative submanifolds}, we construct associative submanifolds in the twisted connected sum $G_2$-manifolds defined in \autoref{def TCS G2}, arising from pairs of building blocks with a hyperkähler rotation (see \autoref{section TCS}). This is formalized in the following theorem.
 
 	\begin{theorem}\label{thm gluing asso from ACyl holo}
	Let $(Z_\pm,X_\pm,\pmb \omega_\pm)$ be a pair of framed building blocks with a hyperk\"ahler rotation $\mathfrak r:X_+\to X_-$. Let $V_\pm:=Z_\pm\setminus X_\pm$ be a pair of corresponding ACyl Calabi--Yau $3$-folds and let $\{(Y_T,\phi_T)\}$ be the family of $G_2$-manifolds obtained from the twisted connected sum construction. Let ${\sC}_\pm$ be a pair of embedded holomorphic curves in $Z_\pm$ intersecting $X_\pm$ transversely at $\bar x_\pm:=\{x_{1,\pm},x_{2,\pm},...,x_{m,\pm}\}$. Then ${\sC}^*_\pm:={\sC}_\pm\setminus \bar x_\pm$ are ACyl embedded holomorphic curves in $V_\pm$ with asymptotic cross sections $\displaystyle \amalg_{j=1}^mS^1\times \{x_{j,\pm}\}\subset S^1\times X_\pm$. Assume that  
	\begin{itemize} 
	\item $\mathfrak r(\bar x_{+})=\bar x_{-}$,
	\item $H^0( {\sC}_\pm,N {\sC}_\pm(-\bar x_\pm))=0$, that is, there are no holomorphic normal vector fields of $\sC_\pm$ vanishing at $\bar x_\pm$, 
	\item $\operatorname{im}(\mathfrak r_*\operatorname{ev}_+)\ \cap\  \operatorname{im}({\operatorname{ev}_-})=\{0\}$, where $$\operatorname{ev}_\pm:=\bigoplus_{j=1}^m\operatorname{ev}_{x_{j,\pm}}:H^0({\sC}_\pm,N{\sC}_\pm)\to   \bigoplus_{j=1}^m T_{x_{j,\pm}}X_\pm$$
are the evaluations maps.
	\end{itemize}
Then the pair of ACyl associative submanifolds $S^1\times {\sC}^*_\pm$ satisfies \autoref{hyp gluing} and there is a family of closed rigid associative submanifolds $\widetilde P_T$ in $(Y_T,\phi_T)$ for all sufficiently large $T$. Moreover, these are diffeomorphic to the twisted connected sum $(S^1\times  {\sC}^*_+) \#_\tau (S^1\times {\sC}^*_-)$ along the $m$ tori, where $\tau$ swaps the circles on each asymptotic cross section.  
\end{theorem}
The proof of the above theorem is based on the following observations.
\begin{lemma}\label{Lem asymptotic ACyl asso from ACyl holo}
Let $Y:=S^1\times V$ be an ACyl $G_2$-manifold with asymptotic cross section $T^2\times X$ of the form described in  \autoref{rmk product ACyl G_2}. Let $\sC^*$ be an ACyl embedded holomorphic curve in $V$ with asymptotic cross section $\amalg_{j=1}^mS^1\times \{x_j\}\subset S^1\times X$. Consider the ACyl associative submanifold $P:=S^1\times \sC^*$ with  asymptotic cross section $\Sigma:=\amalg_{j=1}^m T^2\times \{x_j\}$. Then there are canonical isomorphsims: 
\[\ker{\mathbf D}_{\Sigma} \cong \bigoplus_{j=1}^m T_{x_j}X \qandq \ker{\bD}_{P,0} \cong \ker \mathbf D_{\sC^*,0} \]
where $\ker \mathbf D_{\sC^*,0}$ is the space of all bounded holomorphic normal vector fields of $\sC^*$.
\end{lemma}

\begin{proof} Since the normal bundle of each $T^2 \times \{x_j\} \subset T^2 \times X$ is the flat trivial bundle with fiber $T_{x_j}X$, there is a canonical isomorphsim
$
\ker{\mathbf D}_{\Sigma} = \bigoplus_{j=1}^m \ker D_{T^2 \times \{x_j\}} \cong \bigoplus_{j=1}^m T_{x_j}X,
$
as each kernel in the direct sum consists of covariantly constant sections.

By pulling back we have an inclusion map $\ker \mathbf D_{\sC^*,0}\hookrightarrow \ker{\bD}_{P,0}$. We claim that this is an isomorphism. Since $\mathbf D_{\sC^*}$ is ${J}$-antilinear, formally self-adjoint and commutes with $\partial_\theta$ (recall $\theta$ denotes the coordinate of $S^1$ factor of $Y$), therefore integration by parts shows that any normal vector field $u \in  \ker{\bD}_{P,0}$ satisfies (as the asymptotic limit is covariantly constant):
 $$\inp{\mathbf D_{\sC^*}u}{{J}\partial_\theta u}_{L^2(N\sC^*)}=0.$$ 
Therefore by the identity: $\mathbf D_P= J\partial_\theta+ \mathbf D_{\sC^*}$, such $u$ satisfies $\partial_\theta u=0$ and $\mathbf D_{\sC^*}u=0$. This proves the claim.
\end{proof}

\begin{lemma}\label{Lem ACyl asso from holo in building blocks}
	Let $(Z,X,\pmb \omega)$ be a framed building block and $V:=Z\setminus X$ be the corresponding ACyl Calabi--Yau $3$-fold. Let ${\sC}$ be an embedded holomorphic curve in $Z$ intersecting $X$ transversely at $\bar x:=\{x_1,x_2,...,x_m\}$. Then $\sC^*:={\sC}\setminus \bar x $ is an ACyl embedded holomorphic curve in $V$. Moreover, there is a canonical isomorphism
$${\Lambda_0}:\ker \mathbf D_{\sC^*,0} \to H^0({\sC},N{\sC})$$
satisfying:
\[ \bigoplus_{j=1}^m\operatorname{ev}_{x_j} \circ {\Lambda_0}= \iota_\infty.  \]	
Here $H^0({\sC},N{\sC})$ is the space of all holomorphic normal vector fields of $\sC$. Moreover, $$\ker \iota_\infty\cong H^0( \sC,N \sC(- \bar x)),$$ the space of all holomorphic normal vector fields of $\sC$ vanishing at $\bar x$.
\end{lemma}
\begin{proof}In \cite{Haskins2012}, the definition of an ACyl Calabi--Yau $3$-fold has a diffeomorphism $\Upsilon: (T,\infty)\times S^1 \times X \to V\setminus K_V$ for some compact submanifold with boundary $K_V\subset V$ over the end. This is given by restricting a smooth embedding $\widetilde \Upsilon:\Delta \times X \to Z$, (where $\Delta$ is an open disc in $\C$ around $0$) pre-composed by the biholomorphism  $(T,\infty)\times S^1 \times X\to \Delta \times X$ taking $(t,z,x)\to (\frac{e^{-t}}z,x)$. Denote the coordinate for $\Delta$ by $w$. As $ \sC$  intersects $X$ transversely at $\bar x:=\{x_1,x_2,...,x_m\}$,  near each $x_j$ it can be written as $\widetilde\Upsilon_*(\Gamma_{\widetilde{\alpha}_j})$, where $\Gamma_{\widetilde{\alpha_j}}$ is the graph of a map $\widetilde{\alpha_j}:\Delta \times \{x_j\}\to T_{x_j} X $ (choosing $\Delta$ to be a sufficiently small disc) satisfying
 $$\abs{(\nabla^\perp)^k\widetilde{\alpha_j}}=O(\abs{w}^{1-k})$$
		  for all $j=1,2...,m$, $k\in \N\cup \{0\}$ as $w\to0$. Therefore $\sC^* $ is an ACyl holomorphic curve in $V$ with rate $-1$.
		  
	We will now prove that for each $-1\leq\lambda\leq 0$, there is a canonical injective linear map $$\Lambda_\lambda:\ker \mathbf D_{\sC^*,\lambda}\to H^0( \sC,N \sC(\left \lfloor{\lambda}\right \rfloor  \bar x)).$$
	 As $\widetilde \Upsilon^* J-J_{\C\times X}=0$ along $\Delta\times \{x\}$ for all $x\in X$ (see \cite[Appendix A]{Haskins2012}), $\widetilde w:=\widetilde \Upsilon_* w$ is a holomorphic function around $\bar x$ in $\sC$. Now, given $u\in \ker \mathbf D_{\sC^*,\lambda}$, we have ${\widetilde w}^{-\left \lfloor{\lambda}\right \rfloor } u$ is a bounded weak holomorphic section around $\bar x$ in $\sC$ and therefore it can be extended as a holomorphic section. Hence, $u$ can be extended uniquely to get an element $\Lambda_\lambda(u)$ in $ H^0(\sC,N\sC(\left \lfloor{\lambda}\right \rfloor  \bar x))$. 
	 
	 Moreover, $\Lambda_\lambda$ is surjective if $\lambda\in \Z^m$. In our case, as the asymptotic cross sections are flat tori with flat normal bundles, there are no critical rates in $[-1,0)$ \cite[Example 4.6]{Bera2025}.  Therefore, the same line of argument as in the proof of \autoref{prop main Fredholm acyl} implies that $\ker \iota_\infty=\ker \mathbf D_{\sC^*,-1}$. This completes the proof of the proposition.
\end{proof}
\begin{proof}[{{{\normalfont{\textbf{Proof of \autoref{thm gluing asso from ACyl holo}}}}}}] The theorem is immediate from \autoref{thm gluing associative submanifolds} using  \autoref{Lem asymptotic ACyl asso from ACyl holo} and \autoref{Lem ACyl asso from holo in building blocks}.
\end{proof}

\begin{remark}\label{rmk H1}Let $(Z,X)$ be a building block and $\ell\cong \P^1$ be a rational curve in $Z$ intersecting $X$ transversely at  $\bar x:=\{x_1,x_2,...,x_m\}$, $m\geq 1$. By Grothendieck's lemma, for some $k_1, k_2\in \Z$
 $$N\ell\cong\mathcal O_{\P^1}(k_1)\oplus \mathcal O_{\P^1}(k_2).$$ 
 As $\ell\cdot c_1(Z)=\ell\cdot [X]=m$, we have $k_1+k_2=\ell \cdot c_1(N\ell)=\ell \cdot \big([X]-c_1(T\ell)\big)=m-2$. Thus for some $k\in \Z$,
$$N\ell\cong \mathcal O_{\P^1}(k)\oplus \mathcal O_{\P^1}(m-k-2).$$

Therefore, if we assume $H^0(\ell,N\ell)= 0$, then $\ell$ does not intersect $X$ and $S^1\times \ell $ will be a closed rigid associative in $S^1\times V$ and $N\ell\cong \mathcal O_{\P^1}(-1)\oplus \mathcal O_{\P^1}(-1)$. 

We see that
\begin{align*}
	H^1(\ell,N\ell)&=H^1(\P^1,\mathcal O_{\P^1}(k))\oplus H^1(\P^1,\mathcal O_{\P^1}(m-k-2))\\
	&=H^0(\P^1,\mathcal O_{\P^1}(-k-2))\oplus H^0(\P^1,\mathcal O_{\P^1}(k-m))\ \ (\text{by Serre duality}),
\end{align*} 
 and
   $$H^0(\ell,N\ell(-\bar x))=H^0(\ell,N\ell\tn \mathcal O_{\P^1}(-m))=H^0(\P^1,\mathcal O_{\P^1}(k-m))\oplus H^0(\P^1,\mathcal O_{\P^1}(-k-2)).$$
Thus we obtain that $H^0(\ell,N\ell(-\bar x))\cong H^1(\ell,N\ell)=0$ if and only if $-1\leq k\leq m-1$. 
\end{remark}	

 As an application of \autoref{thm gluing asso from ACyl holo} we prove the next proposition that will help us to produce examples of associative submanifolds in the twisted connected sum $G_2$-manifolds. In particular, we construct associative $3$-spheres in many twisted connected sum $G_2$-manifolds arising from Fano $3$-folds (see \autoref{eg associative $3$-sphere}).	
 
\begin{prop}\label{prop associative sphere} Let $(Z_+, X_+)$ be a building block and let $\ell_+$ be an unobstructed holomorphic line in $Z_+$ (that is, $H^1(\ell_+,N\ell_+)=0$) intersecting $X_+$ transversely at a point. 
Let $W_-$ be a semi-Fano $3$-fold and let $X_- \in \abs{-K_{W_-}}$ be a smooth $K3$ surface in $W_-$ such that ${-K_{W_-}}|_{_{X_-}}$ is very ample and there exists a hyperkähler rotation $\mathfrak r:X_+\to X_-$. Then there is a building block $(Z_-,X_-)$ constructed by blowing up a base locus of an anti-canonical pencil in $W_-$ (see \autoref{thm weak Fano to ACyl building block}) such that the corresponding family of twisted connected sum $G_2$-manifolds $(Y_T,\phi_T)$ always contain closed rigid associative $3$-spheres $\widetilde P_T$ for all sufficiently large $T$.
\end{prop}

To prove \autoref{prop associative sphere} we need the following lemma;  see \cite[Lemma 2.5]{Menet2015} for a proof.

\begin{lemma}\label{lem base locus avoidence}Let $W$ be a semi-Fano $3$-fold and $X=X_\infty\in \abs{-K_W}$ be a smooth anti-canonical $K3$ surface such that ${-K_W}|_{_X}$ is very ample. Let $x\in X$ and $(y,v)\in \P(TX)$  be such that $x$ and $y$ are distinct. Then there is an anti-canonical $K3$ surface $X_0$ such that the base locus $B$ of the pencil $\abs{X_0:X_\infty}$ is smooth and 
 \begin{equation*}
 x\notin B,\ \  y\in B\ \text{and}\ \ T_{y}B=v.
 \end{equation*}
	\end{lemma}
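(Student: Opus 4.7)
I would reduce to a linear algebra question on the very ample linear system $|L|$ where $L := (-K_W)|_X$, and then apply Bertini's theorem. The reduction is provided by the short exact sequence on $W$
\[
0 \to \mathcal{O}_W \xrightarrow{\cdot s_\infty} \mathcal{O}_W(-K_W) \to L \to 0,
\]
with $s_\infty \in H^0(W, -K_W)$ cutting out $X_\infty$, together with the vanishing $H^1(W, \mathcal{O}_W) = 0$ valid for semi-Fano threefolds (which are rationally connected). These yield surjectivity of the restriction $H^0(W, -K_W) \twoheadrightarrow H^0(X, L)$, so every $B \in |L|$ on $X$ is the base locus $X_0 \cap X_\infty$ of some anti-canonical pencil in $W$. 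The problem thus becomes: find a smooth divisor $B \in |L|$ on $X$ with $y \in B$, $T_y B = v$, and $x \notin B$.

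The incidence conditions at $y$ translate to linear equations $s(y) = 0$ and $ds_y|_v = 0$ on the defining section $s \in H^0(X,L)$. By the $1$-jet separation property (equivalent to very ampleness) of $L$, these cut out a linear subspace $V_{y,v} \subset H^0(X, L)$ of codimension exactly $2$; since $L$ very ample forces $X \hookrightarrow \P(H^0(X, L)^*)$ to embed into projective space of dimension at least three, $\dim V_{y,v} \geq 2$. Inside $V_{y,v}$, the open conditions $ds_y \neq 0$ (equivalent to $B$ being smooth at $y$) and $s(x) \neq 0$ (equivalent to $x \notin B$) are both non-vacuous: the first by $1$-jet separation, and the second provided $V_{y,v} \not\subset \ker(\mathrm{ev}_x)$, i.e., provided the length-$3$ subscheme $\{x\} \sqcup Z_{y,v}$ imposes independent conditions on $|L|$, where $Z_{y,v}$ is the length-$2$ subscheme supported at $y$ in direction $v$.

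For a generic $s$ in the resulting Zariski open subset of $V_{y,v}$, Bertini's theorem applied to the linear sub-system $|V_{y,v}|$ on the smooth surface $X$ ensures that $B = \{s = 0\}$ is smooth away from the base locus of $V_{y,v}$, which is supported at $\{y\}$. Combined with the smoothness at $y$ coming from $ds_y \neq 0$, this gives the required smooth $B$, and lifting $s$ to a section of $-K_W$ on $W$ produces the desired $X_0$.

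The main technical point is the length-$3$ separation $V_{y,v} \not\subset \ker(\mathrm{ev}_x)$, a $2$-jet-type property strictly stronger than very ampleness of $L$. Geometrically it says that the projective tangent line to $X \subset \P(H^0(X, L)^*)$ at $y$ in direction $v$ does not pass through the image of $x$. I would verify it in the semi-Fano setting by invoking standard $k$-very-ampleness results of Beltrametti--Sommese or Saint-Donat type for very ample line bundles of sufficient self-intersection on K3 surfaces, using the strong positivity of $(-K_W)|_X$ guaranteed by the semi-Fano hypothesis on $W$.
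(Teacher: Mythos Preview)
The paper gives no proof; it defers to \cite[Lemma 2.5]{Menet2015}. Your reduction to $|L|$ on $X$ via $H^1(W,\mathcal O_W)=0$ is correct, and you correctly isolate the crux: whether the length-$3$ scheme $\{x\}\sqcup Z_{y,v}$ imposes independent conditions on $|L|$. But your proposed resolution via ``standard $k$-very-ampleness results'' does not go through under the stated hypotheses. Take $W\subset\P^4$ a smooth quartic threefold; then $X\subset\P^3$ is a quartic K3, $L=\mathcal O_X(1)$, $L^2=4$, and every $B\in|L|$ is a hyperplane section $H\cap X$. The conditions $y\in B$, $T_yB=v$ force $H$ to contain the projective tangent line $\ell$ to $X$ at $y$ in direction $v$. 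Since $\ell\cap X$ has length $4$ with multiplicity $\geq2$ at $y$, for general $(y,v)$ there are residual points $p,q\in(\ell\cap X)\setminus\{y\}$; choosing $x=p$ then gives $x\in B$ for \emph{every} admissible $B$. So the length-$3$ separation genuinely fails and the lemma, read literally, is false for $V_4$. Your side claim that the base locus of $|V_{y,v}|$ is $\{y\}$ fails for the same reason: it is $\ell\cap X$.

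The only application in the paper (the proof of \autoref{prop associative sphere}) uses just the $(y,v)$ part of the conclusion and never the condition $x\notin B$. That weaker statement does follow from your argument using very ampleness alone, provided you replace the unjustified assertion ``base locus $=\{y\}$'' by the observation that the base locus is the finite set $\ell\cap X$, and then check that the general $H\supset\ell$ is transverse to $X$ at each residual base point---a proper closed condition per point. Either the cited lemma carries an extra positivity hypothesis you should track down, or the clause $x\notin B$ here is an over-statement relative to what is actually needed and provable.
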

	
\begin{proof}[{{{\normalfont{\textbf{Proof of \autoref{prop associative sphere}}}}}}]  Suppose $\ell_+\cap X_+=\{x_+\}$. Define $x_-:=\mathfrak r(x_+)\in X_-$. Since $N\ell_+\cong \mathcal O_{\P^1}(-1)\oplus \mathcal O_{\P^1}$ (cf. \autoref{rmk H1}), $\dim_\C H^0(\ell_+,N\ell_+)=1$ and therefore we can choose $v\in \mathbf P(T_{x_-}X_-)$ such that $v\cap \operatorname{im} \mathfrak r_*\operatorname{ev}_+=\{0\}$. By \autoref{lem base locus avoidence} we can choose an anti-canonical pencil in $W_-$ so that the base locus $B_-$ is smooth, $x_-\in B_-$ and $ T_{x_-}B_-=v$. Let $\pi:Z_-\to W_-$ be the blow up of $W_-$ along $B_-$. Consider the unobstructed line $\ell_-:=\pi^{-1}(x_-)\subset Z_-$. Then $\operatorname{im} \operatorname{ev}_-=v\in \mathbf P(T_{x_-}X_-)$. Since $H^0(\ell_\pm,N\ell_\pm(-x_\pm))\cong H^1(\ell_\pm,N\ell_\pm)=0$ (cf. \autoref{rmk H1}), by \autoref{thm gluing asso from ACyl holo} we complete the proof.
\end{proof}

\begin{example}\label{eg associative $3$-sphere}
\autoref{prop associative sphere} produces rigid associative $3$-spheres as follows.
\begin{enumerate}[label=(\roman*), leftmargin=*]	
\item We apply \autoref{prop associative sphere} to all very ample Fano $3$-folds $W_\pm$ that have been used in \cite{CHNP15} to obtain twisted connected sum $G_2$ manifolds. Let  $Z_+$ be a building block that comes from $W_+$. More explicitly, $\pi:Z_+\to W_+$ is a blow-up of $W_+$ along a base locus $B_+$ as described in  \autoref{thm weak Fano to ACyl building block}. Then for every $x\in B_+$ we can use the unobstructed holomorphic line $\pi^{-1}(x)$ in \autoref{prop associative sphere} to obtain a rigid associative $3$-sphere. 
\item \citet{Shokurov1979a} proved that every Fano $3$-fold $W$ of index $1$ except $\P^1\times \P^2$ contains a line $\ell$ with $-K_{W}\cdot\ell=1$. Any line in a general Fano $3$-fold of index $1$ and Picard rank $1$ is unobstructed \cite[Proposition 4.2.2 and Theorem 4.2.7]{Iskovskih1999}. In the Mori-Mukai list there are $97$ deformation types of Fano $3$-folds with very ample anti-canonical bundle and $8$ of them are of index $1$ and Picard rank $1$ namely,
$$\#_3^1, \#_4^1, \#_5^1,  \#_6^1,  \#_7^1,  \#_8^1, \#_9^1, \#_{10}^1$$
where $\#^\rho_n$ is the $n$-th Fano $3$-fold with Picard rank $\rho$ in the Mori-Mukai list (see \cite[Appendix 12.2]{Iskovskih1999}). The Fano $3$-folds with not very ample anti-canonical bundle \cite[Theorem 2.4.5, Theorem 2.1.16]{Iskovskih1999} are 
$$\#_1^1, \#_2^1, \#_{12}^1,  \#_1^2,  \#_2^2,  \#_3^2, \#_1^7, \#_1^8.$$
Therefore we are able to take $8$ and $97$ Fano $3$-folds as $W_+$ and $W_-$, respectively to apply \autoref{prop associative sphere} (as $\rk \Pic(W_+)+\rk \Pic(W_-)\leq 11$). In this way we obtain many rigid associative $3$-spheres. \qedhere
\end{enumerate}
\end{example}
\begin{remark}This list of examples is not exhaustive. The reader may find more examples by applying \autoref{prop associative sphere} to many other Fano $3$-folds or more generally to semi-Fano $3$-folds.
\end{remark}


\section{Associative submanifolds by gluing ACyl special Lagrangian $3$-folds}\label{sec Associative submanifolds by gluing of ACyl special Lagrangian $3$-folds} 
In this section, we rephrase \autoref{hyp gluing} in the setting where the ACyl associative submanifolds are given by ACyl special Lagrangian $3$-folds. In particular, using \autoref{thm gluing associative submanifolds}, we again construct another type of associative submanifolds in the twisted connected sum $G_2$-manifolds defined in \autoref{def TCS G2} (see \autoref{section TCS}). This is formalized in the following theorem.

\begin{theorem}\label{thm gluing with ACyl SL}Let $(V_\pm,\omega_\pm,\Omega_\pm)$ be a pair of ACyl Calabi--Yau $3$-folds with asymptotic cross sections $(X_\pm,\omega_{1}^\pm,\omega_{2}^\pm,\omega_{3}^\pm)$ having a hyperkähler rotation $\mathfrak r:X_+\to X_-$. Let  $\{(Y_T,\phi_T)\}$ be the family of  $G_2$-manifolds obtained from the twisted connected sum construction. Let $L_\pm$ be a matching pair of connected ACyl embedded special Lagrangian $3$-folds with connected cross sections $\Sigma_{s_\pm}:=\{e^{is_\pm}\}\times\Sigma_\pm$ in $V_\pm$, where $\Sigma_\pm$ are $I_3^\pm$-holomorphic curves in $X_\pm$. Let $i_\pm^*:H^1(L_\pm,\R)\to H^1(\Sigma_\pm,\R)$ be the map induced by the inclusion $\Upsilon_\pm\circ \iota_\pm:\Sigma_\pm\cong\{T\}\times \Sigma_\pm\to L_\pm$ for any large $T$. Assume that 
	\begin{itemize} 
	\item $\mathfrak r(\Sigma_+)=\Sigma_-$,
	\item $b_2(L_\pm)=0$,
	\item $\operatorname{im}(I_3^+\circ i_+^*) \cap \operatorname{im}{(\mathfrak r^*\circ i_-^*)}=\{0\}$.
	\end{itemize}
Then $L_{s_\mp}:=\{e^{is_\mp}\}\times L_\pm$ satisfies \autoref{hyp gluing} and hence we obtain a family of closed rigid associative submanifolds $\widetilde P_T$ in $(Y_T,\phi_T)$ for all sufficiently large $T$ which are diffeomorphic to the twisted connected sum $L_+ \#_\mathfrak r L_-$.
\end{theorem}

The proof of the above theorem relies on the following observations. Consider \autoref{eg ACyl asso from ACyl SL}(i); with a slight abuse of notation we reuse the symbols $L$ and $\Sigma$ from that example. 

\begin{definition}\label{def PhiL}
Define  isometries $$\Phi_{L}:C^\infty(NL)\to \Omega^0(L,\R)\oplus \Omega^1(L,\R),$$ $$\Phi_\Sigma:\Omega^0(\Sigma,\R)\oplus \Omega^0(\Sigma,\R)\oplus C^\infty(N_X\Sigma) \to \Omega^0(\Sigma,\R)\oplus \Omega^0(\Sigma,\R)\oplus \Omega^1(\Sigma,\R)$$  by
$$\Phi_{L}(u):=(\inp{\partial_\theta}{u},(\partial_\theta\times u)^\flat) \qandq  \Phi_\Sigma(f_1,f_2,u):=(f_1,-f_2,\iota_u\omega_1).$$
Here $\theta$ denotes the coordinate of $S^1$ factor of the $G_2$-manifold obtained by taking its product with an ACyl Calabi--Yau $3$-fold. 

Let $\bD_L$ and $\bD_\Sigma$ be the Fueter operators defined in \autoref{eq Fueter operator} and \autoref{eq: Fueter on Riemann surface}, respectively. Define the following operators 
 \[\check{\mathbf D}_L:=\Phi_{L}{\mathbf D}_{L}\Phi_{L}^{-1} \qandq \check{\mathbf D}_\Sigma:=\Phi_{\Sigma}{\mathbf D}_{\Sigma}\Phi_{\Sigma}^{-1}.\qedhere\]
 \end{definition}
\begin{lemma}\label{Lem Fueter Sl}The following holds.
\[\check{\mathbf D}_L=
\begin{bmatrix}
0 & d^* \\
d & *d \\
\end{bmatrix}
\qandq 
\check{\mathbf D}_\Sigma=
\begin{bmatrix}
0 & 0& d_\Sigma^{*} \\
0 &0 & -*d_\Sigma\\
d_\Sigma & *d_\Sigma &0
\end{bmatrix}.\]
\end{lemma}
\begin{proof} A direct computation shows that $\Phi_{L}^{-1}(f,\sigma)=f\partial_\theta-\partial_\theta\times\sigma_\sharp$, where and $\sigma_\sharp$ is the metric dual to the $1$-form $\sigma$. Now given a $1$-form $\sigma\in \Omega^1(L,\R)$, we observe that $\nabla^\perp_L(\partial_\theta\times\sigma_\sharp)=\partial_\theta\times \nabla^\parallel_L\sigma_\sharp$. Therefore, $${\Phi_{L}}_*{\nabla}^\perp_{L}(f,\sigma)=\Phi_{L}{\nabla}^\perp_{L}\Phi_{L}^{-1}(f,\sigma)=(\nabla f,{\nabla}^\parallel_{L}\sigma).$$ Denote the Clifford multiplication for $\mathbf D_L$ by $\gamma$. We see that $${\Phi_{L}} \gamma(v) \Phi_{L}^{-1}(f,\sigma)=(-\inp{v}{\sigma_\sharp},fv^\flat+ (v\times \sigma_\sharp )^\flat)=(-\iota_v\sigma,fv^\flat+*(v^\flat\wedge \sigma) ).$$  
	Hence, $\check{\mathbf D}_L={\Phi_{L}}_*\gamma({\Phi_{L}}_*{\nabla}^\perp_{L})$ has to be as in the statement of the proposition.
	
	Replacing $L$ by the associative cylinder $C=\R\times \Sigma$, we see that $NC=\pi^*(N_Z\Sigma)$,  $N_Z\Sigma=\R^2\oplus N_X\Sigma$ and $$\Phi_C:\Omega^0(C,\R)\oplus \Omega^0(C,\R)\oplus C^\infty(\R, C^\infty(N_X\Sigma) )\to \Omega^0(C,\R)\oplus \Omega^0(C,\R)\oplus C^\infty(\R,\Omega^1(\Sigma,\R))$$
is given by $\Phi_C(f_1,f_2,u)=(f_1,-f_2,\iota_u\omega_1)$. Since ${\mathbf D}_C=J\partial_t+{\mathbf D}_\Sigma$, it completes the proof.
\end{proof}

\begin{lemma}\label{Lem ACyl asso ACyl SL }Assume the asymptotic cross section $\Sigma$ of $L$ is connected. Then the de Rham cohomology class map $[\cdot]$ induces the isomorphisms:
\[[\cdot]: \ker{\check \bD}_{L,0}\to  H^0(L,\R)\oplus H^1(L,\R)\]
and 
 \[[\cdot]:\ker \check{\mathbf D}_{\Sigma} \to H^0(\Sigma,\R)\oplus H^0(\Sigma,\R)\oplus H^1(\Sigma,\R). \]
Moreover, the following diagram commutes:
\[\begin{tikzcd}
\ker{\mathbf D}_{L,0} \arrow{r}{\Phi_{L}} \arrow[swap]{d}{\iota_{\infty}} & \ker{\check \bD}_{L,0}\cong H^0(L,\R)\oplus H^1(L,\R) \arrow{d}{i^*\oplus(0\oplus i^*)} \\
\ker{\mathbf D}_{\Sigma} \arrow{r}{\Phi_{\Sigma}} & \ker{\check \bD}_{\Sigma}\cong H^0(\Sigma,\R)\oplus \big(H^0(\Sigma,\R)\oplus H^1(\Sigma,\R)\big). \end{tikzcd}
\]  
\end{lemma}
\begin{proof}
Set $\Omega^k_\lambda(L,\R):=\{\sigma\in \Omega^k(L,\R):\abs{\nabla^l \sigma}=O(e^{\lambda t})\ \text{as}\ t\to \infty, \forall l\in \N\cup\{0\}\}.$ 
We claim that the linear map $$[\cdot]_1:\mathcal H^1_0:=\{\sigma\in \Omega_0^1(L,\R):d\sigma=0, d^*\sigma=0\}\to H^1(L,\R)$$
given by $\sigma\mapsto [\sigma]$, is an isomorphism. The proof of this claim can be found in the literature; for e.g.\cite[Section 5.2]{Nordstrom2008}, but for reader's convenience we include a proof here.

Suppose $[\sigma]=0$ for some $\sigma\in \mathcal H^1_0 $. Then $\sigma=dh$ for some harmonic function $h=O(\log t)$ on $L$; in fact $h\in \Omega^0_\lambda(L,\R)$ for any $\lambda>0$. Consider the Laplace operator:
\[ \Delta_\lambda:=\Delta_L:  \Omega^0_\lambda(L,\R)\to  \Omega^0_\lambda(L,\R).\]
Fix $\lambda>0$ sufficiently small.  An argument with integration by parts proves that $\ker \Delta_{-\lambda}=0$. Since $\coker \Delta_{\pm \lambda}\cong \ker \Delta_{\mp\lambda}$ and  $\ind \Delta_\lambda-\ind \Delta_{-\lambda}=2b^0(\Sigma)$ \cite[Theorem 4.15]{Marshall2002}, it follows that $\dim \ker \Delta_\lambda = b^0(\Sigma)$. As $\Sigma$ is connected and the constant functions already lie in $\ker \Delta_\lambda$, the function $h$ must be constant, and hence $\sigma = 0$. This shows that $[\cdot]_1$ is injective. 

 Given $[\eta]\in H^1(L,\R)$,  we choose the harmonic representative, say $\sigma_\Sigma$, of the image of $[\eta]$ under the restriction map $i^*:H^1(L,\R)\to H^1(\Sigma,\R)$. Over the end, under the canonical identifications, $\eta$ can be expressed as:
 \[\eta=\sigma_\Sigma+df, \quad \text{for some} \quad f=f(T_0)+\int_{T_0}^t\iota_{\partial_t}\eta.\]
Then $\widetilde \eta:=\eta-d(\chi_{T_0}f)$ is exactly $\sigma_\Sigma$ over the end. Moreover, integration by parts implies that $\int_L d^*\widetilde \eta=0$. Since $\coker \Delta_{-\lambda} \cong \ker \Delta_{\lambda} $ is one dimensional as above, there exists $\widetilde h \in  \Omega^0_{-\lambda}(L,\R)$ with  $\lambda>0$ sufficiently small such that $d^*\widetilde\eta=\Delta_{L}\widetilde h$. Define $\sigma:=\widetilde \eta-d\widetilde h\in \mathcal H^1_0$. Then $[\eta]=[\sigma]$ and hence $[\cdot]_1$ is surjective. This proves the above claim.

 The argument above also implies that every $\sigma \in \mathcal{H}^1_0$ is asymptotic to the harmonic representative $\sigma_\Sigma$ of the image of $[\sigma]$ under the above restriction map $i^*$. The above explains that any bounded harmonic function on $L$ is a constant and therefore: $(f,\sigma)\in\ker{\check{\mathbf D}}_{L,0}$ if and only if 
$df=0$, $d\sigma=0$, $d^*\sigma=0$. This completes the proof.
\end{proof}

\begin{proof}[{{{\normalfont{\textbf{Proof of \autoref{thm gluing with ACyl SL}}}}}}]
 Since $\mathfrak r(\Sigma_+)=\Sigma_-$, therefore by definition of $f$ we have $f(\Sigma_{s_+})=\Sigma_{s_-}$. By \autoref{Lem ACyl asso ACyl SL } we see that $\iota^\pm_\infty$ is injective if and only if $i_\pm^*:H^1(L_\pm,\R)\to H^1(\Sigma_\pm,\R)$ is injective. Since $L_\pm$ and $\Sigma_\pm$ are connected, this is again equivalent to $H^1_{\operatorname{cs}}(L_\pm,\R)\cong H_2(L_\pm,\R)=0$.  By \autoref{Lem ACyl asso ACyl SL }, over $H^1(\Sigma_\pm,\R)$: 
	$$\mathfrak r^*\Phi_{\Sigma_-}\mathfrak r_*\Phi_{\Sigma_+}^{-1}=I_3^+.$$
	Therefore, $\operatorname{im}(f_*\iota^+_\infty) \cap \operatorname{im}{\iota^-_\infty}=\{0\}$ if and only if $\operatorname{im}(I_3^+\circ i_+^*) \cap \operatorname{im}{(\mathfrak r^*\circ i_-^*)}=\{0\}$.
	\end{proof}
The last two conditions in the assumption of \autoref{thm gluing with ACyl SL} are automatically satisfied if $b^1(L_\pm)=0$. A simple way to construct ACyl special Lagrangian $3$-folds is to look for anti-holomorphic involutions on building blocks; see \autoref{eg ACyl asso from ACyl SL}. Given the following hypothesis on a building block that admits an anti-holomorphic involution, we construct another building block, and using \autoref{thm gluing with ACyl SL} we produce a closed associative submanifold in the associated twisted connected sum $G_2$-manifold. In particular, we construct associative submanifolds diffeomorphic to $\R\P^3$ or $\R\P^3\# \R\P^3$.
	   
\begin{hypothesis}\label{hyp assso from involution SL} The building block $(Z,X)$ satisfies the following:
	\begin{itemize} 
\item $Z$ admits an anti-holomorphic involution $\sigma:Z\to Z$ preserving $X$,	
\item $X$ admits a non-symplectic involution $\rho$ that commutes with ${\sigma}|_{X}$, 
\item  $\operatorname{Fix}_{\sigma}(X)$ has a connected component $\Sigma$ with $\rho(\Sigma)=\Sigma$, $b^1({\Sigma}/{\<\rho\>})=0$ and  $\Sigma\cap \operatorname{Fix}_{\rho}(X)=\emptyset$.
\item $\operatorname{Fix}_{\sigma}(Z)\setminus X$ has a connected, non-compact component $L$ with $\partial L=\Sigma$ and $b^1(L)=0$.
\end{itemize}
\end{hypothesis}

\begin{prop}\label{prop asso from ACyl SL by involution}Let $(Z_+,X_+)$ be a building block satisfying \autoref{hyp assso from involution SL}.  
Then $V_{+}:=Z_{+}\setminus X_{+}$ admits an ACyl Calabi--Yau structure $(\omega_{+},\Omega_{+})$ such that $\sigma_{+}$ is an anti-holomorphic involutive isometry on $V_{+}$. If the hyperkähler structure of $X_+$ is $(\omega_{1}^+,\omega_{2}^+,\omega_{3}^+)$, then take $X_-$ to be the same manifold $X_+$ but with hyperkähler structure $(\omega_{2}^+,\omega_{1}^+,-\omega_{3}^+)$ so that the identity map $\id:X_+\to X_-$ becomes a hyperkähler rotation. Then $\rho_-:=\rho_+\circ\sigma_+$ is a non-symplectic involution on $X_-$.
 Define $Z_-$ to be the blow-up of $W_-:=\tfrac{\C\P^1\times X_-}{\<\iota\times{\rho_-}\>}$ along the fixed point locus, where $\iota:\C\P^1\to \C\P^1$ is  $\iota(z)=1/z$ as in \autoref{thm Kovalev-Lee} and $\widetilde L_-$ to be the proper transform of $\tfrac{\R\P^1\times \Sigma_+}{\<\iota\times{\rho_+}\>}$. Denote $L_-:=\widetilde L_-\setminus X_-$. Then the pair of ACyl special Lagrangians $L_\pm$ satisfies the conditions in \autoref{thm gluing with ACyl SL} and hence we obtain a family of closed rigid associative submanifolds $\widetilde P_T$ in $(Y_T,\phi_T)$ for all sufficiently large $T$.
\end{prop}

\begin{proof}
By \cite[Proposition 5.2]{Kovalev2013}, $V_{+}:=Z_{+}\setminus X_{+}$ admits an ACyl Calabi--Yau structure $(\omega_{+},\Omega_{+})$ such that $\sigma_{+}$ is an anti-holomorphic involutive isometry on $V_{+}$. Now by definition we have $\rho_+^*(\omega_{1}^+)=\omega_{1}^+$ and $\rho_+^*(\omega_{2}^++i\omega_{3}^+)=-\omega_{2}^+-i\omega_{3}^+$. Also $\sigma_+^*(\omega_{1}^+)=-\omega_{1}^+$ and $\sigma_+^*(\omega_{2}^++i\omega_{3}^+)=-\omega_{2}^++i\omega_{3}^+$. We can now easily check that $\rho_-$ is a non-symplectic involution on $X_-$. Since ${\Sigma_+}/{\<\rho_+\>}$ is a deformation retract of $L_-$, therefore $b^1(L_-)=0$. Thus the pair of ACyl special Lagrangians $L_\pm$ satisfies the conditions of \autoref{thm gluing with ACyl SL}. 
 \end{proof}

\begin{example}[{\citet{Nordstrom2013}}]\label{eg Nordstrom RP3} Let $W_+$ be the Fano $3$-fold in $\C\P^4$ defined by the quartic polynomial 
$$Q(z_0,z_1,z_2,z_3,z_4)=-z_0^4+z_1^4+z_2^4+z_3^4+z_4^4=0.$$
Then $X_\infty:=\{z_4=0\}$ and $X_0:=\{z_0=0\}$ are anti-canonical divisors in $W_+$. The base locus $B$ of the anti-canonical pencil $\abs{X_0,X_\infty}$ is $\{z_0=0,z_4=0\}$. Let $Z_+$ be the blow-up of $W_+$ along $B$ and let $X_+$ be the proper transform of $X_\infty$ as described in  \autoref{thm weak Fano to ACyl building block}. Then $(Z_+,X_+)$ is a building block. As the complex conjugation in $\C\P^4$ acts on $X_\infty$ and $B$, it induces an anti-holomorphic involution ${\sigma_+}$ on $Z_+$ which acts also on $X_+$ by \cite[pg. 19]{Kovalev2013}. The involution $\rho_\infty:X_\infty\to X_\infty$ defined by
$$\rho_\infty(z_0,z_1,z_2,z_3,0)=(-z_0,z_1,z_2,z_3,0)$$
 induces a non-symplectic involution $\rho_+$ on $X_+$ which commutes with ${\sigma_{+}}|_{X_+}$ and acts freely on $\Sigma_+:=\operatorname{Fix}_{\sigma_+}(X_+)$. Since $\operatorname{Fix}_{\sigma_+}(W_+)$ is disjoint from $B$ therefore $\operatorname{Fix}_{\sigma_+}(Z_+)$ is homeomorphic to $W_+\cap \R\P^4\cong S^3$ and $\Sigma_+:=\operatorname{Fix}_{\sigma_+}(X_+)$ is homeomorphic to $X_\infty\cap \R\P^4\cong S^2$. The involution $\rho_\infty$ acts on $S^2$ as an antipodal map. Thus $\operatorname{Fix}_{\sigma_+}(Z_+)\setminus\Sigma_+$ is a disjoint union of two $3$-balls. Let $L_+$ be any one of these $3$-balls. Applying \autoref{prop asso from ACyl SL by involution} we obtain a family of closed rigid associative submanifolds $\widetilde P_T$ in  $(Y_T,\phi_T)$, each of which is diffeomorphic to $\R\P^3$. 
\end{example}
	\begin{example}\label{eg RP3 RP3 first} Let $X_\infty$ be the $K3$ surface with non-symplectic involution $\rho_\infty$ as described in  \autoref{eg Nordstrom RP3}.
Let $W_+:=\tfrac{\C\P^1\times X_\infty}{\<\iota\times{\rho_\infty}\>}$ and let $Z_+$ be the blow-up of $W_+$ as described in  \autoref{thm Kovalev-Lee}. Let $X_+$ be the proper transform of $\{\infty\}\times X_\infty$ as described in  \autoref{thm Kovalev-Lee}. Then $(Z_+,X_+)$ is a building block. As the complex conjugation in $\C\P^3$ acts on $X_\infty$ and $\operatorname{Fix}_{\rho_\infty}(X_\infty)$, it induces an anti-holomorphic involution ${\sigma_+}$ on $Z_+$ which acts also on $X_+$ by \cite[pg. 19]{Kovalev2013}. The non-symplectic involution $\rho_\infty$ on $X_+$ commutes with ${\sigma_{+}}|_{X_+}$ and again acts on $\Sigma_+:=\operatorname{Fix}_{\sigma_+}(X_+)\cong S^2$ as an antipodal map. Let $\widetilde L_+$ be the proper transform of $\frac{\R\P^1\times \Sigma_\infty}{\<\iota\times{\rho_\infty}\>}$, where $\Sigma_\infty:=\operatorname{Fix}_{\rho_\infty}(X_\infty)$. Applying \autoref{prop asso from ACyl SL by involution} we obtain a family of closed rigid associative submanifolds $\widetilde P_T$  in $(Y_T,\phi_T)$, each of which is diffeomorphic to $\R\P^3\#\R\P^3$.
\end{example}

\begin{example}\label{eg RP3 RP3 2nd} Let $X_\infty\subset \C\P(1,1,1,3)$ be the $K3$ surface defined by the polynomial 
$$P(z_0,z_1,z_2,z_3)=z_3^2-z_0^6-z_1^6-z_2^6=0$$ 
which is a double cover of $\C\P^2$ branched along the curve $\{z_0^6+z_1^6+z_2^6=0\}$. Define a non-symplectic involution $\rho_\infty$ on $X_\infty$ taking $(z_0,z_1,z_2,z_3)\mapsto (z_0,z_1,z_2,-z_3)$. Let $W_+:=\tfrac{\C\P^1\times X_\infty}{\<\iota\times{\rho_\infty}\>}$ and let $Z_+$ be the blow-up of $W_+$ as described in  \autoref{thm Kovalev-Lee}. Let $X_+$ be the proper transform of $\{\infty\}\times X_\infty$. Then $(Z_+,X_+)$ is a building block. The complex conjugation in $\C\P(1,1,1,3)$ induces an anti-holomorphic involution ${\sigma_+}$ on $Z_+$ which acts also on $X_+$ by \cite[pg. 19]{Kovalev2013}. The  non-symplectic involution $\rho_\infty$ on $X_+$ commutes with ${\sigma_{+}}|_{X_+}$ and acts freely on $\Sigma_+:=\operatorname{Fix}_{\sigma_+}(X_+)$. Let $\widetilde L_+$ be the proper transform of $\tfrac{\R\P^1\times \Sigma_\infty}{\<\iota\times{\rho_\infty}\>}$, where $\Sigma_\infty:=\operatorname{Fix}_{\rho_\infty}(X_\infty)$. Under the homeomorphism $\R\P(1,1,1,3)\to \R\P^4$ taking $(x_0,x_1,x_2,x_3)\to (x_0^3,x_1^3,x_2^3,x_3)$ we see that ${\Sigma_+}\cong S^2$ and the involution $\rho_\infty$ acts on $S^2$ as an antipodal map. Applying \autoref{prop asso from ACyl SL by involution} we obtain a family of closed rigid associative submanifolds $\widetilde P_T$ in $(Y_T,\phi_T)$, each of which is diffeomorphic to $\R\P^3\#\R\P^3$.
\end{example}

\begin{remark}This list of examples is not exhaustive. The reader may find more examples by applying \autoref{prop asso from ACyl SL by involution} to $K3$ surfaces having commuting non-symplectic involutions and anti-holomorphic involutions, studied by \citet{Nikulin2005, Nikulin2007}, and  by \citet{ReidegeldCAG2023, ReidegeldDGA2023}.
\end{remark}

\printreferences

\noindent
\author{Simons Center for Geometry and Physics, State University of New York, Stony Brook, NY 11794} \\ E-mail address: \href{ mailto:gbera@scgp.stonybrook.edu}{gbera@scgp.stonybrook.edu} 

\end{document}